\documentclass[11pt]{article}
\usepackage{epsf}
\usepackage{amsbsy,amsmath}
\usepackage{mathtools}
\usepackage{mathrsfs}
\usepackage{amsfonts}
\usepackage{breqn}
\usepackage{enumitem}
\usepackage{enumerate}
\usepackage{amssymb}
\usepackage{enumitem}
\usepackage{eucal}
\usepackage{graphics,mathrsfs}
\usepackage{amsthm}
\usepackage{secdot}
\oddsidemargin .15in \evensidemargin .25in \textwidth 155mm
\topmargin .01in \textheight 218mm \linespread{1.08}
\parskip = 0.8pt

\newtheorem{theorem}{Theorem}[section]
\newtheorem{lemma}[theorem]{Lemma}

\newtheorem{claim}[theorem]{claim}

\theoremstyle{definition}

\theoremstyle{remark}

\numberwithin{equation}{section}

\numberwithin{equation}{section}

\newsavebox{\savepar}

\pagestyle{myheadings}
\begin{document}

\title{Existence and multiplicity of solutions to a nonlocal elliptic PDE with variable exponent in a Nehari manifold using the Banach fixed point theorem}
\author{Amita Soni and D. Choudhuri\\ 
\small{Department of Mathematics, National Institute of Technology Rourkela}\\
\small{Emails: soniamita72@gmail.com, dc.iit12@gmail.com}}
\date{}
\maketitle
\begin{abstract}
\noindent In this paper we study the existence and multiplicity of two distinct nontrivial weak solutions of the following equation in Nehari manifold. We have also proved that these solutions are in $L^{\infty}(\Omega)$.
\begin{align*}
\begin{split}
-\Delta_{p(x,y)}^{s(x,y)}u &= \beta|u|^{\alpha(x)-2}u+\lambda f(x,u)\,\,\mbox{in}\,\,\Omega,\\
u &= 0\,\, \mbox{in}\,\, \mathbb{R}^{N}\setminus\Omega
\end{split}
\end{align*}
Here, $\lambda, \beta > 0$ are parameters and $f(x,u)$ is a general nonlinear term satisfying certain conditions. The domain $\Omega\subset\mathbb{R}^N (N\geq 2)$ is smooth and bounded. The relation between the exponents are assumed in the order $2 < \alpha^{-}\leq\alpha(x)\leq\alpha^{+} < p^{-}\leq p(x,y)\leq p^{+} < q^{+} < r^{+} < r^{+2} < p_{s}^{*}(x)$. Also, $\alpha(x)\leq p(x,x)\;\forall\;x\in\overline{\Omega}$ and $s(x,y)p(x,y) < N \;\forall\;(x,y)\in\overline{\Omega}\times\overline{\Omega}$. 
\begin{flushleft}

{\bf Keywords}:~ Nehari manifold, Banach fixed point theorem, variable exponent space.\\
{\bf AMS classification}:~35D30, 35J60, 46E35, 35J35.
\end{flushleft}
\end{abstract}
\section{Introduction}
We will consider the following problem.
\begin{align}
\begin{split}
-\Delta_{p(x,y)}^{s(x,y)}u &= \beta|u|^{\alpha(x)-2}u+\lambda f(x,u)\,\,\mbox{in}\,\,\Omega\\
u &= 0\,\, \mbox{in}\,\, \mathbb{R}^{N}\setminus\Omega
\end{split}
\end{align}
where $p\in C(\overline{\Omega}\times\overline{\Omega})$ such that $1 < p(x,y) < \infty$ and $s\in C(\overline{\Omega}\times\overline{\Omega})$ such that $0 < s(x,y) < 1$. Also $s(x,y)p(x,y) < N\;\forall\;(x,y)\in \overline{\Omega}\times\overline{\Omega}$, $\lambda,\beta > 0$ are parameters and the relation between the exponents are assumed to be in the order $2 < \alpha^{-}\leq\alpha(x)\leq\alpha^{+} < p^{-}\leq p(x,y)\leq p^{+} < q^{+} < r^{+} < r^{+2} < p_{s}^{*}(x)$. Also, $\alpha(x)\leq p(x,x)\;\forall\;x\in\overline{\Omega}\;\forall\;x\in\overline{\Omega}$. We have considered an elliptic equation involving a nonlocal type with variable exponent similar to that in \cite{Biswas} but with different assumptions on the nonlinear term. In the literature, there are quite a good number of articles available to show existence of multiple solutions in Nehari manifold for both local and nonlocal operators. For example readers may refer \cite{Ghanmi}, \cite{Choudhuri}, \cite{Brown}.  In \cite{Ghanmi}, the authors dealt with a singular problem involving the fractional Laplace operator in Nehari manifold. In \cite{Choudhuri}, the authors worked with fractional $p$-Laplacian operator involving convex-concave nonlinearities. They have shown the existence of two distinct non trivial weak solutions in Nehari manifold using fiber maps. In \cite{Brown}, the authors have proved the existence of solutions involving Laplacian operator in Nehari manifold. In addition, they have also proved the dependence of Nehari manifold on the parameter $\lambda$ and linked the properties of the manifold to existence and non-existence results of positive solutions. Since there has been an increased growth in PDE's involving the nonlocal operator, advancement of the operators and hence their corresponding space on which we seek solutions for such kind of operators is also increasing rapidly. Some very commonly used areas of research where the nonlocal opeartors are extensively used are in the thin obstacle problems, optimization, anomalous diffusion, finance, phase transition, continuum mechanics, graph theory, jump processes, machine learning etc. Some applications of nonlocal operators can be seen in the articles \cite{Negoro},  \cite{Caf}, \cite{Ruz}, \cite{Laskin}, \cite{Chen}. Nonlocal operators with a variable exponent also finds its use in image restoration and obstacle problems.\\
One of the most interesting and trending nonlocal and nonlinear operator are operators with variable exponent. For more details on these kind of operators and their variants, the readers may refer \cite{Bahrouni}, \cite{Azroul},  \cite{Diening},  \cite{Fan} and the references therein. In \cite{Azroul}, the authors have extended the $W^{s,p(x,y)}$ to a more general fractional case $W^{K,p(x,y)}$ and proved some qualitative properties of this new space. These work includes the basic ideas and origination of variable exponent spaces along with its applications. In past few years, some appreciable work have been done on variable exponent spaces involving Nehari manifold. To see the idea involved in these works, some suggested articles are \cite{Alves}, \cite{Avci}, \cite{Rasouli}, \cite{Fallah} and the references therein.
%{In \cite{Rasouli}, the author has established the existence of positive solutions in Nehari manifold with a general differential operator involving variable exponent. In \cite{Fallah}, the authors have proved the existence of solutions in Nehari manifold of the equation involving $p(x)$-Laplacian with nonnegative weight functions and nonlinear boundary condition. In \cite{Alves}, the authors have proved multiplicity of solutions of a pde involving $p(x)$-Laplacian and concave-convex type nonlinearity. They have used variational method, Ekeland variational principle and Nehari manifolds.}
However due to the presence of variable exponent it is difficult to show the existence of solutions in Nehari manifold in the same manner as done for constant exponent case. This difficulty further increases when the term involved in the equation is a general nonlinear function. To our knowledge there are only few articles in the literature available for the variable exponent problem involving a general nonlinear term whose solutions are in  Nehari manifold. In the problem we have considered in this article, we will in the Nehari manifold. The novelty in this work lies is in the fact that for showing existence of solutions we are applying both the variational technique and the Banach fixed point theorem. We will show the existence of two weak nontrivial solutions and the uniform estimate of both the weak solutions will be discussed. 
\begin{theorem}
Let the nonlinear function $f(x,u)$ satisfies $(f_{1})-(f_{4})$. Then the problem (1.1) admits two distinct nontrivial weak solutions in the Nehari manifold for a particular range of $\lambda$ and $\beta$. Moreover the solutions are in $L^{\infty}(\Omega).$
\end{theorem}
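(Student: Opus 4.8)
The plan is to work in the variable-exponent fractional Sobolev space $W=W^{s(x,y),p(x,y)}_0(\Omega)$ naturally associated with $-\Delta^{s(x,y)}_{p(x,y)}$, endowed with its Gagliardo-type Luxemburg norm, and to introduce the energy functional
\[
J_{\lambda,\beta}(u)=\int_{\mathbb{R}^{2N}}\frac{|u(x)-u(y)|^{p(x,y)}}{p(x,y)\,|x-y|^{N+s(x,y)p(x,y)}}\,dx\,dy-\beta\int_\Omega\frac{|u|^{\alpha(x)}}{\alpha(x)}\,dx-\lambda\int_\Omega F(x,u)\,dx,
\]
where $F(x,t)=\int_0^t f(x,\tau)\,d\tau$. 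From $(f_1)$--$(f_4)$ (which should encode subcritical growth of $f$ below $p_s^*(x)$, its behaviour near the origin and at infinity, and an Ambrosetti--Rabinowitz-type condition) one verifies that $J_{\lambda,\beta}\in C^1(W,\mathbb{R})$, and then defines the Nehari manifold $\mathcal{N}_{\lambda,\beta}=\{u\in W\setminus\{0\}:\langle J_{\lambda,\beta}'(u),u\rangle=0\}$ together with the decomposition $\mathcal{N}_{\lambda,\beta}=\mathcal{N}^+\cup\mathcal{N}^0\cup\mathcal{N}^-$ according to the sign of $\phi_u''(1)$, where $\phi_u(t)=J_{\lambda,\beta}(tu)$ is the fibering map.

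Because $p(x,y)$ and $\alpha(x)$ are non-constant, the equation $\phi_u'(t)=0$ is transcendental rather than polynomial, so the classical explicit projection onto $\mathcal{N}_{\lambda,\beta}$ is unavailable; this is precisely where the Banach fixed point theorem enters. For fixed $u$ on the unit sphere of $W$, I would recast $\phi_u'(t)=0$ as a fixed point equation $t=T_u(t)$ on a suitable closed subinterval of $(0,\infty)$, and, exploiting the strict ordering $2<\alpha^-\le\alpha^+<p^-\le p^+<q^+<r^+$ and the growth bounds on $f$, show that $T_u$ is a contraction once $\lambda$ and $\beta$ lie in an explicit range. This yields a unique $t(u)>0$ with $t(u)u\in\mathcal{N}_{\lambda,\beta}$; a companion estimate on $\phi_u''(t(u))$ shows $\mathcal{N}^0=\emptyset$ for the same range of parameters. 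The same exponent ordering makes $J_{\lambda,\beta}$ coercive and bounded below on $\mathcal{N}_{\lambda,\beta}$, so $c^\pm:=\inf_{\mathcal{N}^\pm}J_{\lambda,\beta}$ are finite.

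I would then apply Ekeland's variational principle on $\overline{\mathcal{N}^+}$ and $\overline{\mathcal{N}^-}$ to produce minimising sequences that are simultaneously Palais--Smale sequences for $J_{\lambda,\beta}$. The subcritical growth of both $f$ and $\beta|u|^{\alpha(x)-2}u$ (note $\alpha^+,r^+<p_s^*(x)$) provides compact embeddings $W\hookrightarrow\hookrightarrow L^{\alpha(x)}(\Omega)$ and $W\hookrightarrow\hookrightarrow L^{r(x)}(\Omega)$, which allow one to pass to the limit, obtain strong convergence, and conclude that the limits $u^\pm$ are nontrivial critical points of $J_{\lambda,\beta}$, hence weak solutions of (1.1); since $\mathcal{N}^0=\emptyset$ the constraint produces no obstruction (the Lagrange multiplier vanishes), and since $\mathcal{N}^+\cap\mathcal{N}^-=\emptyset$ the solutions $u^+$ and $u^-$ are distinct. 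For the regularity claim, a De Giorgi--Moser iteration applies: testing the weak formulation with truncated powers of $u$, using the variable-exponent fractional Sobolev inequality and the subcriticality of the exponents, one derives a recursive inequality for the $L^{p_k}$-norms of $u$ on super-level sets whose iteration stays bounded, giving $u^\pm\in L^\infty(\Omega)$.

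The principal obstacle I anticipate is the fibering analysis in the absence of homogeneity: proving the contraction estimate for $T_u$ uniformly over the unit sphere of $W$, and controlling $\phi_u''$ to force $\mathcal{N}^0=\emptyset$, both demand a careful splitting of every modular integral into the regions $\{|u|\le1\}$ and $\{|u|>1\}$, where norm and modular behave differently, together with the appropriate Simon-type inequalities for the $p(x,y)$-form. A secondary difficulty is verifying that $J_{\lambda,\beta}$ satisfies the Palais--Smale condition at the levels $c^\pm$ in this nonlocal, non-homogeneous setting, where vanishing and dichotomy of the minimising sequences must be excluded.
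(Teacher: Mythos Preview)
Your overall variational scheme is reasonable, but you have misidentified where the Banach fixed point theorem enters, and this is the paper's central novelty. You propose to use contraction to solve the fibering equation $\phi_u'(t)=0$ and thereby project onto $\mathcal{N}_{\lambda,\beta}$; the paper does nothing of the sort. Because $(f_1)$--$(f_2)$ force $F(x,\cdot)$ to be \emph{exactly} $r^+$-homogeneous, the $F$-term in the fibering map is a pure power $t^{r^+}$, and the paper analyses $\phi_u$ directly: it studies the auxiliary function $\phi(t)=\int t^{p(x,y)-r^+}|\cdots|^{p(x,y)}-\beta\int t^{\alpha(x)-r^+}|u|^{\alpha(x)}$, locates its maximum by sign changes of $\phi'$, and obtains $\tau_1 u\in\mathcal{N}^+$, $\tau_2 u\in\mathcal{N}^-$ without any fixed-point argument. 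Likewise $\mathcal{N}^0=\emptyset$ is proved by a short contradiction using only the ordering $\alpha^+<p^+<r^+$, not by estimating $\phi_u''$.

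Where the paper actually invokes Banach's theorem is in producing the second solution: after obtaining the first solution by direct minimisation on $\mathcal{N}^+$, it proves that $(-\Delta_{p(x,y)}^{s(x,y)})^{-1}$ is well defined and H\"older continuous from the dual space into $W$, and then shows that the solution map $\Phi(u)=(-\Delta_{p(x,y)}^{s(x,y)})^{-1}\bigl(\beta|u|^{\alpha(x)-2}u+\lambda f(x,u)\bigr)$ is a contraction on a small ball around $u_0=\tau_2 u\in\mathcal{N}^-$ for $\lambda,\beta$ small. Its unique fixed point $u_2$ is the second weak solution. Thus the two solutions arise by two different mechanisms (minimisation on $\mathcal{N}^+$, contraction near $\mathcal{N}^-$), whereas you treat both symmetrically via Ekeland. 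Your route could in principle yield two solutions too, but it bypasses the paper's main technical contribution---the Lipschitz/H\"older estimate for the inverse of the variable-exponent fractional $p$-Laplacian---and your proposed contraction on the fibering map is not obviously realisable. The $L^\infty$ part via Moser iteration is essentially what the paper does.
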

\section{Preliminaries}
We state the well known Banach fixed point theorem which will be used in our work.
\begin{theorem}
\textbf{Banach fixed point theorem}: Let $(X,d)$ be a nonempty complete metric space with a contraction mapping $G:X\rightarrow X$. Then $G$ admits a unique fixed point.
\end{theorem}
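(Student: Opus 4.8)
The plan is to construct a fixed point as the limit of the Picard iteration sequence and then verify that it is the unique one. Let $k\in[0,1)$ denote the contraction constant, so that $d(G(x),G(y))\le k\,d(x,y)$ for all $x,y\in X$. First I would fix an arbitrary starting point $x_0\in X$ and define a sequence recursively by $x_{n+1}=G(x_n)$ for $n\ge 0$. The entire argument rests on showing that $(x_n)$ is a Cauchy sequence; once this is established, completeness of $(X,d)$ supplies a candidate limit, and the contraction property takes care of the rest.

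The key step is the geometric decay of the distances between consecutive terms. By induction on $n$, using the contraction inequality, one obtains $d(x_{n+1},x_n)=d(G(x_n),G(x_{n-1}))\le k\,d(x_n,x_{n-1})$, whence $d(x_{n+1},x_n)\le k^{n}\,d(x_1,x_0)$. Then, for any $m>n$, the triangle inequality gives
\begin{align*}
d(x_m,x_n)\le\sum_{j=n}^{m-1}d(x_{j+1},x_j)\le d(x_1,x_0)\sum_{j=n}^{m-1}k^{j}\le \frac{k^{n}}{1-k}\,d(x_1,x_0).
\end{align*}
Since $k\in[0,1)$, the right-hand side tends to $0$ as $n\to\infty$, so $(x_n)$ is Cauchy. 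This estimate is the main obstacle, in the sense that it is the only place where the strict inequality $k<1$ is used in an essential, quantitative way, namely through the convergence of the geometric series; everything afterward is purely formal.

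By completeness there exists $x^{*}\in X$ with $x_n\to x^{*}$. To see that $x^{*}$ is a fixed point, I would exploit the fact that a contraction is Lipschitz and hence continuous, so that passing to the limit in $x_{n+1}=G(x_n)$ yields $x^{*}=\lim_n x_{n+1}=\lim_n G(x_n)=G(x^{*})$. Equivalently, and more quantitatively, one may bound $d(x^{*},G(x^{*}))\le d(x^{*},x_{n+1})+d(G(x_n),G(x^{*}))\le d(x^{*},x_{n+1})+k\,d(x_n,x^{*})$ and let $n\to\infty$ to conclude $d(x^{*},G(x^{*}))=0$, i.e. $G(x^{*})=x^{*}$.

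Finally, for uniqueness, suppose $x^{*}$ and $y^{*}$ both satisfy $G(x^{*})=x^{*}$ and $G(y^{*})=y^{*}$. Then $d(x^{*},y^{*})=d(G(x^{*}),G(y^{*}))\le k\,d(x^{*},y^{*})$, so that $(1-k)\,d(x^{*},y^{*})\le 0$; since $1-k>0$ this forces $d(x^{*},y^{*})=0$, that is $x^{*}=y^{*}$. This completes the proof that $G$ admits a unique fixed point.
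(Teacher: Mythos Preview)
Your proof is correct and is the standard Picard iteration argument. The paper itself does not prove this statement: it merely states the Banach fixed point theorem as a well-known preliminary result to be used later, with no proof given. So there is nothing to compare against; your argument is a complete and correct proof of a result the paper simply quotes.
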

\noindent For each open subset $\Omega \subset \mathbb{R}^{N}(N \geq 2),$ let $p\in C(\overline{\Omega}\times\overline{\Omega})$ such that $1 < p(x,y) < \infty$ and $s\in C(\overline{\Omega}\times\overline{\Omega})$ such that $0 < s(x,y) < 1$. Also $s(x,y)p(x,y) < N\;\forall\;(x,y)\in \overline{\Omega}\times\overline{\Omega}$. Let 
\begin{equation}
1 < p^{-}:=\underset{(x,y)\in\overline{\Omega}\times\overline{\Omega}}{\text{inf}}\;p(x,y)\leq \underset{(x,y)\in\overline{\Omega}\times\overline{\Omega}}{\text{sup}}\;p(x,y)=: p^{+} < \infty
\end{equation}
\begin{equation}
0 < s^{-}:=\underset{(x,y)\in\overline{\Omega}\times\overline{\Omega}}{\text{inf}}\;s(x,y)\leq \underset{(x,y)\in\overline{\Omega}\times\overline{\Omega}}{\text{sup}}\;s(x,y)=:s^{+} < 1
\end{equation}
Here, $s(x,y)$ and $p(x,y)$ are symmetric in nature for all $(x,y)\in\overline{\Omega}\times\overline{\Omega}$. The variable exponent space, denoted by $W^{q(x),s(x,y),p(x,y)}(\Omega)$, with variable order and variable exponents is the space 
$$\left\lbrace u\in L^{q(x)}(\Omega):\int_{\Omega}\int_{\Omega}\frac{|u(x)-u(y)|^{p(x,y)}}{\lambda^{p(x,y)}|x-y|^{N+s(x,y)p(x,y)}}dx dy < \infty,\; \text{for some}\; \lambda > 0\right\rbrace$$
with the norm $\|u\|_{s(x,y),p(x,y)}=\|u\|_{q(x)(\Omega)}+[u]^{s(x,y),p(x,y)}_{\Omega},$ where $$[u]^{s(x,y),p(x,y)}_{\Omega}=\text{inf}\left\lbrace \lambda > 0:\int_{\Omega}\int_{\Omega}\frac{|u(x)-u(y)|^{p(x,y)}}{\lambda^{p(x,y)}|x-y|^{N+s(x,y)p(x,y)}}dx dy < 1\right\rbrace.$$
For more details and results related to this space refer \cite{Biswas}. We will now state a few results from the reference \cite{Biswas} which will be used in our article.
\noindent Define $X=W^{q(x),s(x,y),p(x,y)}(\Omega)$ over $T=\mathbb{R}^{2N}\setminus(\Omega^{c}\times\Omega^{c})$ as the space $$\left\lbrace u:\mathbb{R}^N\rightarrow\mathbb{R}:u|_{\Omega}\in L^{q(x)}(\Omega), \int_{T}\frac{|u(x)-u(y)|^{p(x,y)}}{\lambda^{p(x,y)}|x-y|^{N+s(x,y)p(x,y)}}dx dy < \infty,\; \text{for some}\; \lambda > 0\right\rbrace$$
and $X_{0}$ i.e. $W_{0}^{q(x),s(x,y),p(x,y)}(\Omega)$ to be the space $\left\lbrace u\in X:u=0 \;\;\text{a.e. in}\;\; \mathbb{R}^N\setminus\Omega\right\rbrace.$ This space is a convex, reflexive and separable Banach space (for proof refer \cite{Biswas}) with respect to the norm $$\|u\|_{X_{0}}=\text{inf}\left\lbrace \lambda > 0:\int_{T}\frac{|u(x)-u(y)|^{p(x,y)}}{\lambda^{p(x,y)}|x-y|^{N+s(x,y)p(x,y)}}dx dy < 1\right\rbrace$$
$$\;\;\;\;\;\;\;\;\;\;\;\;\;\;\;\;\;\;\;\;=\text{inf}\left\lbrace \lambda > 0:\int_{\mathbb{R}^N}\int_{\mathbb{R}^N}\frac{|u(x)-u(y)|^{p(x,y)}}{\lambda^{p(x,y)}|x-y|^{N+s(x,y)p(x,y)}}dx dy < 1\right\rbrace.$$
\begin{theorem}
Let $\Omega\subset\mathbb{R}^{N}(N\geq 2)$ is a smooth bounded domain. Let $s(.,.),p(.,.)$ satisfy (2.1) and (2.2) along with symmetry such that $s(x,y)p(x,y) < N\;\;\forall\;x,y\in \overline{\Omega}\times\overline{\Omega}$ and $q\in C_{+}(\overline{\Omega})$ such that $p(x,x)\leq q(x) < p^{\ast}_{s}(x)$ for all $x\in\overline{\Omega}$. Suppose that $\beta\in C_{+}(\overline{\Omega})$ such that $1 < \beta(x) < p^{\ast}_{s}(x)$ for $x\in\overline{\Omega}$. Then there exists a constant $K=K(N,s,p,q,\beta,\Omega) > 0$ such that for every $u\in X_{0}, \|u\|_{L^{\beta(x)}(\mathbb{R}^N)}=\|u\|_{L^{\beta(x)}(\Omega)}\leq K\|u\|_{X_{0}}$. Moreover, this embedding is compact.
\end{theorem}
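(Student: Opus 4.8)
The plan is to prove the continuous embedding by a localization argument that reduces the estimate to the classical constant–exponent fractional Sobolev inequality on small balls, and then to bootstrap this to compactness via a fractional Rellich--Kondrachov argument together with the strict subcriticality $\beta(x)<p^{\ast}_{s}(x)$.

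\emph{Step 1: localization and the continuous embedding.} Since $\overline{\Omega}$ and $\overline{\Omega}\times\overline{\Omega}$ are compact, the functions $p,s,q,\beta$ are uniformly continuous there. Fix a small $\varepsilon>0$ and cover $\overline{\Omega}$ by finitely many balls $B_{i}=B(x_{i},r_{i})$, $i=1,\dots,m$, on each of which the oscillation of $p(\cdot,\cdot)$, $s(\cdot,\cdot)$, $q(\cdot)$ and $\beta(\cdot)$ is at most $\varepsilon$; let $\{\psi_{i}\}$ be a smooth partition of unity subordinate to $\{B_{i}\}$. On $B_{i}$ one sandwiches $s(x,y)p(x,y)$ and $p(x,y)$ between constants $s_{i}^{\pm}p_{i}^{\pm}$ and $p_{i}^{\pm}$ differing by $O(\varepsilon)$, and $\beta(x)$ between $\beta_{i}^{\pm}$; since $p(x,x)\le q(x)<p^{\ast}_{s}(x)$ on $\overline{\Omega}$, for $\varepsilon$ small enough $\beta_{i}^{+}$ stays below the constant–exponent fractional critical value attached to $(s_{i}^{-},p_{i}^{-})$ on $B_{i}$. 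Applying the classical inequality $W^{s_{i}^{-},p_{i}^{-}}(B_{i})\hookrightarrow L^{\beta_{i}^{+}}(B_{i})$ to $\psi_{i}u$, and comparing the modular and Luxemburg quantities on $B_{i}$ with their constant–exponent analogues via the elementary bounds $t^{a}\le 1+t^{b}$ and $|B_{i}|<\infty$, gives a local estimate for $\|\psi_{i}u\|_{L^{\beta(x)}(B_{i})}$. Summing over $i$ and controlling the commutator terms produced by the derivatives of $\psi_{i}$ — which involve only lower–order $L^{q(x)}$ quantities, available because $p(x,x)\le q(x)$, up to a fractional Poincar\'e inequality on $X_{0}$ — yields $\|u\|_{L^{\beta(x)}(\Omega)}\le K\|u\|_{X_{0}}$ with $K=K(N,s,p,q,\beta,\Omega)$. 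The equality $\|u\|_{L^{\beta(x)}(\mathbb{R}^{N})}=\|u\|_{L^{\beta(x)}(\Omega)}$ is immediate since $u=0$ a.e.\ outside $\Omega$.

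\emph{Step 2: local compactness.} Let $(u_{n})$ be bounded in $X_{0}$, say $\|u_{n}\|_{X_{0}}\le M$. As $u_{n}$ vanishes outside the bounded set $\Omega$, the modular over $T=\mathbb{R}^{2N}\setminus(\Omega^{c}\times\Omega^{c})$ controls, up to a constant depending on $\Omega$, the full Gagliardo–type double integral over $\mathbb{R}^{N}\times\mathbb{R}^{N}$. Splitting the integrand according to whether $|u_{n}(x)-u_{n}(y)|$ exceeds $1$, and using $p^{-}\le p(x,y)$, $s^{-}\le s(x,y)$ together with the $L^{p^{-}}$ bound from Step~1 (the case $\beta\equiv p^{-}$), one obtains a uniform bound on the constant–order seminorm $[u_{n}]_{W^{s^{-},p^{-}}(\mathbb{R}^{N})}$. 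By the Riesz--Fr\'echet--Kolmogorov criterion this yields uniform decay of $\|u_{n}(\cdot+h)-u_{n}\|_{L^{p^{-}}(\mathbb{R}^{N})}$ as $h\to 0$, hence a subsequence (not relabelled) converging strongly in $L^{p^{-}}(\Omega)$ and a.e.\ in $\Omega$; call the limit $u$.

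\emph{Step 3: upgrading to $L^{\beta(x)}$, and the main obstacle.} If $\beta^{+}\le p^{-}$ the inclusion $L^{p^{-}}(\Omega)\hookrightarrow L^{\beta(x)}(\Omega)$ finishes the proof. In general, since $\beta(x)<p^{\ast}_{s}(x)$ on the compact set $\overline{\Omega}$ we have $\inf_{\overline{\Omega}}p^{\ast}_{s}(x)/\beta(x)=:1+2c>1$; choose $\eta>0$ so small that $(p^{\ast}_{s}(x)-\eta)/\beta(x)\ge 1+c$ on $\overline{\Omega}$, and apply Step~1 with exponent $p^{\ast}_{s}(\cdot)-\eta$ to get a uniform bound for $\int_{\Omega}|u_{n}|^{p^{\ast}_{s}(x)-\eta}\,dx$. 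Then $\{|u_{n}|^{\beta(x)}\}$ is bounded in $L^{r(x)}(\Omega)$ with $r(x)=(p^{\ast}_{s}(x)-\eta)/\beta(x)\ge 1+c$, hence uniformly integrable; combined with the a.e.\ convergence from Step~2 and Vitali's theorem this gives $\int_{\Omega}|u_{n}-u|^{\beta(x)}\,dx\to 0$, equivalently $\|u_{n}-u\|_{L^{\beta(x)}(\Omega)}\to 0$ since $\beta^{+}<\infty$, so the embedding is compact. I expect the genuine difficulty to be entirely in Step~1: transporting the constant–exponent fractional Sobolev inequality through the partition of unity while keeping precise track of the exponent oscillations and of the passage between modulars and Luxemburg norms; once the continuous embedding and the constant–order fractional Rellich lemma are in hand, Steps~2 and~3 are routine.
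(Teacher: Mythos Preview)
The paper does not give its own proof of this theorem: it is stated in Section~2 as a quoted result from the reference \cite{Biswas} (Biswas--Tiwari), preceded by the sentence ``We will now state a few results from the reference \cite{Biswas} which will be used in our article,'' and no argument is supplied. So there is nothing in the paper to compare your proposal against.

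That said, your strategy is essentially the one used in the source paper \cite{Biswas} and in the earlier constant--order works it builds on (Kaufmann--Rossi--Vidal, Bahrouni--R\u{a}dulescu): a finite covering of $\overline{\Omega}$ by small balls on which the exponents are nearly constant, reduction on each ball to the classical fractional Sobolev embedding $W^{s_i,p_i}\hookrightarrow L^{\beta_i}$, and then compactness via a Vitali/uniform--integrability argument exploiting the strict gap $\beta(x)<p^{\ast}_{s}(x)$. Two points to watch if you flesh this out: (i) the nonlocal seminorm does not localize as cleanly as a gradient --- multiplying by a cutoff $\psi_i$ produces a cross term of the form $\int\!\!\int |u(y)|^{p(x,y)}|\psi_i(x)-\psi_i(y)|^{p(x,y)}|x-y|^{-N-s(x,y)p(x,y)}dxdy$, which one must bound by $\|u\|_{L^{q(x)}}$ using the Lipschitz bound on $\psi_i$ and the integrability of $|x-y|^{-N+(1-s(x,y))p(x,y)}$ over the bounded set; and (ii) in Step~2 the pointwise comparison $|x-y|^{-N-s(x,y)p(x,y)}\ge |x-y|^{-N-s^{-}p^{-}}$ holds only for $|x-y|\le 1$, so one should split the double integral at distance $1$ (or $\mathrm{diam}\,\Omega$) before passing to the constant--exponent seminorm. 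With these caveats your outline is correct and matches the literature argument the paper is citing.
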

\begin{lemma}
The modular function, $\rho_{X_{0}}(u)=\int_{\mathbb{R}^N}\int_{\mathbb{R}^N}\frac{|u(x)-u(y)|^{p(x,y)}}{|x-y|^{N+s(x,y)p(x,y)}}dx dy$, has the following properties.
\begin{itemize}
\item $\|u\|_{X_{0}} < 1(=1;> 1)\Leftrightarrow \rho_{X_{0}}(u) < 1(=1; >1)$,
\item $\|u\|_{X_{0}} < 1\Rightarrow \|u\|_{X_{0}}^{p^{+}}\leq \rho_{X_{0}}(u)\leq \|u\|_{X_{0}}^{p^{-}},$
\item $\|u\|_{X_{0}} > 1\Rightarrow \|u\|_{X_{0}}^{p^{-}}\leq \rho_{X_{0}}(u)\leq \|u\|_{X_{0}}^{p^{+}},$
\item $\underset{n\rightarrow\infty}{\text{lim}}||u_{n}||_{X_{0}}=0(\infty)\Leftrightarrow \underset{n\rightarrow\infty}{\text{lim}}\rho_{X_{0}}(u_{n})=0(\infty)$.
\end{itemize}
\end{lemma}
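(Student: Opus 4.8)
The plan is to reduce every assertion to the behaviour of the single-variable function
$$\Phi(\lambda):=\rho_{X_0}\!\left(\frac{u}{\lambda}\right)=\int_{\mathbb{R}^N}\int_{\mathbb{R}^N}\frac{|u(x)-u(y)|^{p(x,y)}}{\lambda^{p(x,y)}|x-y|^{N+s(x,y)p(x,y)}}\,dx\,dy,\qquad \lambda>0,$$
which is exactly the quantity appearing in the definition of $\|\cdot\|_{X_0}$, so that $\|u\|_{X_0}=\inf\{\lambda>0:\Phi(\lambda)<1\}$ and $\rho_{X_0}(u)=\Phi(1)$. First I would fix $u\in X_0\setminus\{0\}$ (the case $u=0$ being trivial) and record three facts about $\Phi$. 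Strict monotonicity: since $p(x,y)\ge p^->0$, the map $\lambda\mapsto\lambda^{-p(x,y)}$ is strictly decreasing for every $(x,y)$, so $\Phi$ is strictly decreasing on $(0,\infty)$. Finiteness everywhere: if $\Phi(\lambda_1)<\infty$ for one $\lambda_1$ (which holds because $u\in X_0$), then the pointwise bound $\lambda^{-p(x,y)}\le\max\{(\lambda_1/\lambda)^{p^+},(\lambda_1/\lambda)^{p^-}\}\,\lambda_1^{-p(x,y)}$ dominates the integrand at $\lambda$ by a constant multiple of the integrand at $\lambda_1$, giving $\Phi(\lambda)<\infty$ for all $\lambda>0$. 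Continuity: with this domination in force, the dominated convergence theorem on any compact subinterval $[a,b]\subset(0,\infty)$ (using the integrable majorant corresponding to $\lambda=a$) yields continuity of $\Phi$; together with $\Phi(\lambda)\to0$ as $\lambda\to\infty$ and $\Phi(\lambda)\to\infty$ as $\lambda\to0^+$.

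Given these facts, the crux is the normalization identity: by continuity, strict monotonicity and the boundary limits, the intermediate value theorem produces a unique $\lambda_0>0$ with $\Phi(\lambda_0)=1$; then $\{\lambda:\Phi(\lambda)<1\}=(\lambda_0,\infty)$, whose infimum is $\lambda_0$, so $\|u\|_{X_0}=\lambda_0$ and
$$\rho_{X_0}\!\left(\frac{u}{\|u\|_{X_0}}\right)=\Phi\bigl(\|u\|_{X_0}\bigr)=1.$$
I expect this identity — that the infimum defining the Luxemburg-type norm is attained and coincides with the level set $\Phi=1$ — to be the main obstacle, since it is the only place where measure-theoretic input (finiteness everywhere plus dominated/monotone convergence) is genuinely needed; everything that follows is algebraic. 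The trichotomy in the first bullet then comes from comparing $\Phi(1)=\rho_{X_0}(u)$ with $\Phi(\lambda_0)=1$ via strict monotonicity: $\|u\|_{X_0}=\lambda_0<1$ is equivalent to $\Phi(1)<\Phi(\lambda_0)=1$, i.e. $\rho_{X_0}(u)<1$, and the same comparison produces the equality case and the reverse inequality, covering all three cases at once.

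For the second and third bullets I would use the scaling identity, writing $\lambda_0=\|u\|_{X_0}$,
$$\rho_{X_0}(u)=\int_{\mathbb{R}^N}\int_{\mathbb{R}^N}\lambda_0^{p(x,y)}\,\frac{|u(x)-u(y)|^{p(x,y)}}{\lambda_0^{p(x,y)}|x-y|^{N+s(x,y)p(x,y)}}\,dx\,dy,$$
where the normalized integrand integrates to $\Phi(\lambda_0)=1$. When $\lambda_0<1$ one has $\lambda_0^{p^+}\le\lambda_0^{p(x,y)}\le\lambda_0^{p^-}$ pointwise, and integrating gives $\|u\|_{X_0}^{p^+}\le\rho_{X_0}(u)\le\|u\|_{X_0}^{p^-}$; when $\lambda_0>1$ the bounds on $\lambda_0^{p(x,y)}$ reverse, yielding $\|u\|_{X_0}^{p^-}\le\rho_{X_0}(u)\le\|u\|_{X_0}^{p^+}$. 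Finally, the fourth bullet follows by squeezing: if $\|u_n\|_{X_0}\to0$ then eventually $\|u_n\|_{X_0}<1$ and the second bullet gives $0\le\rho_{X_0}(u_n)\le\|u_n\|_{X_0}^{p^-}\to0$, while if $\rho_{X_0}(u_n)\to0$ then eventually $\rho_{X_0}(u_n)<1$, so $\|u_n\|_{X_0}<1$ and $\|u_n\|_{X_0}^{p^+}\le\rho_{X_0}(u_n)$ forces $\|u_n\|_{X_0}\to0$; the limit $\infty$ is handled identically using the third bullet.
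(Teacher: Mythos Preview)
Your proof is correct and follows the standard Luxemburg-norm argument for modulars in variable-exponent spaces. However, the paper does not actually prove this lemma: it is listed in Section~2 among ``a few results from the reference \cite{Biswas}'' and stated without proof, so there is no in-paper argument to compare against. Your approach --- analyzing the auxiliary function $\Phi(\lambda)=\rho_{X_0}(u/\lambda)$, establishing its strict monotonicity, continuity and boundary limits via dominated convergence, extracting the normalization identity $\Phi(\|u\|_{X_0})=1$, and then reading off all four bullets from the pointwise bounds $\lambda_0^{p^\pm}$ on $\lambda_0^{p(x,y)}$ --- is exactly the standard one used in the variable-exponent literature (e.g.\ Diening--Harjulehto--H\"ast\"o--R\r{u}\v{z}i\v{c}ka), and every step is sound.
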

\begin{lemma}
Let $\mu(x)\in L^{\infty}(\Omega)$ such that $\mu\geq 0, \mu \not\equiv 0$. Let $\nu : \Omega\rightarrow\mathbb{R}$ be a measurable function such that $\mu(x)\nu(x)\geq 1$ a.e. in $\Omega$. Then for every $u\in L^{\mu(x)\nu(x)}(\Omega)$,
$$\left\||u|^{\mu(.)}\right\|_{L^{\nu(x)}(\Omega)}\leq \left\|u\right\|^{\mu^{-}}_{L^{\mu(x)\nu(x)}(\Omega)}+\left\|u\right\|^{\mu^{+}}_{L^{\mu(x)\nu(x)}(\Omega)}$$
\end{lemma}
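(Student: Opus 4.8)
The plan is to convert the claimed norm inequality into a single modular inequality and then exploit the pointwise monotonicity of the map $t\mapsto A^{t}$. First I would record two preliminary observations forced by the hypotheses. Since $\mu(x)\nu(x)\geq 1$ a.e. and $\mu\geq 0$, one cannot have $\mu(x)=0$ on a set of positive measure (otherwise $\mu\nu=0<1$ there), so $\mu(x)>0$ a.e.; consequently $\nu(x)=\mu(x)\nu(x)/\mu(x)\geq 1/\mu^{+}>0$ a.e. This strict positivity of $\nu$ is what makes the Luxemburg norm $\|\cdot\|_{L^{\nu(x)}}$ a genuine norm and lets me use the monotone relation between modular and norm without sign ambiguities.

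Next I would set up the reduction. Writing $v(x):=|u(x)|^{\mu(x)}$, so that $|v(x)|^{\nu(x)}=|u(x)|^{\mu(x)\nu(x)}$, I let $A:=\|u\|_{L^{\mu(x)\nu(x)}(\Omega)}$ and $B:=A^{\mu^{-}}+A^{\mu^{+}}$; the case $u=0$ (i.e. $A=0$) is trivial, so assume $A>0$. By the unit-modular property of the Luxemburg norm, $\int_{\Omega}(|u|/A)^{\mu(x)\nu(x)}\,dx\leq 1$. To prove $\|v\|_{L^{\nu(x)}}\leq B$ it then suffices, again straight from the definition of the norm, to verify the single modular bound $\int_{\Omega}(|v|/B)^{\nu(x)}\,dx\leq 1$, since this exhibits $B$ as a competitor in the infimum defining $\|v\|_{L^{\nu(x)}}$.

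The heart of the argument, and the only real obstacle, is a pointwise comparison of the two integrands, complicated by the fact that the normalizing constant enters as the $x$-dependent quantity $B^{\nu(x)}$. Since $(|v|/B)^{\nu(x)}=|u|^{\mu(x)\nu(x)}/B^{\nu(x)}$, I would factor $|u|^{\mu(x)\nu(x)}/B^{\nu(x)}=(|u|/A)^{\mu(x)\nu(x)}\cdot A^{\mu(x)\nu(x)}/B^{\nu(x)}$ and aim to show the correcting factor satisfies $A^{\mu(x)\nu(x)}/B^{\nu(x)}\leq 1$ a.e.; because $\nu(x)>0$, this is equivalent to the exponent-free inequality $A^{\mu(x)}\leq B$. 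The hard part is that $t\mapsto A^{t}$ behaves oppositely according to whether $A\leq 1$ or $A>1$, so a case split is unavoidable: if $A\leq 1$ then $t\mapsto A^{t}$ is nonincreasing and $\mu^{-}\leq\mu(x)$ gives $A^{\mu(x)}\leq A^{\mu^{-}}\leq B$; if $A>1$ then $t\mapsto A^{t}$ is nondecreasing and $\mu(x)\leq\mu^{+}$ gives $A^{\mu(x)}\leq A^{\mu^{+}}\leq B$. In either case $A^{\mu(x)}\leq B$ a.e., which is exactly what the correcting factor requires.

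Combining these ingredients, $(|v|/B)^{\nu(x)}\leq (|u|/A)^{\mu(x)\nu(x)}$ a.e., so integrating and using the modular bound for $u/A$ yields $\int_{\Omega}(|v|/B)^{\nu(x)}\,dx\leq \int_{\Omega}(|u|/A)^{\mu(x)\nu(x)}\,dx\leq 1$. Hence $B$ lies in the set defining the Luxemburg norm of $v$, giving $\left\||u|^{\mu(\cdot)}\right\|_{L^{\nu(x)}(\Omega)}\leq B=\|u\|_{L^{\mu(x)\nu(x)}(\Omega)}^{\mu^{-}}+\|u\|_{L^{\mu(x)\nu(x)}(\Omega)}^{\mu^{+}}$, which is the assertion. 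The one technical point I would address at the end is the strict-versus-nonstrict convention in the infimum defining the norm used in this paper: since the modular is continuous and strictly decreasing in the scaling parameter at points where $u\neq 0$, the bound at $B$ upgrades to a strict modular inequality for every $\lambda>B$, so the infimum is indeed bounded above by $B$.
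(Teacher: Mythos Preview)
The paper does not supply a proof of this lemma: it is listed among the preliminaries quoted from \cite{Biswas} and is stated there without argument. Your proof is correct and self-contained. The reduction to a single modular inequality via the Luxemburg norm, the factorisation $(|v|/B)^{\nu(x)}=(|u|/A)^{\mu(x)\nu(x)}\cdot (A^{\mu(x)}/B)^{\nu(x)}$, and the case split on $A\le 1$ versus $A>1$ to obtain $A^{\mu(x)}\le A^{\mu^-}+A^{\mu^+}=B$ is exactly the standard device for this kind of variable-exponent estimate. Your remarks that $\mu>0$ and $\nu>0$ a.e.\ (forced by $\mu\nu\ge 1$) and your handling of the strict inequality in the paper's Luxemburg-norm convention are both to the point; the latter is needed because this paper defines the norm with ``$<1$'' rather than ``$\le 1$''.
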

\noindent Henceforth, we will denote $\|u\|_{X_{0}}=\|u\|$.\\
We are now stating the two crucial inequalities that will be used while proving the results.
\begin{equation}
|a+b|^{p}\leq 2^{p}(|a|^{p}+|b|^{p})\;\forall\;a,b\in\mathbb{R}\;\text{and}\;1\leq p < \infty
\end{equation}
\begin{equation}
||a|^{p(x)-2}a-|b|^{p(x)-2}b|\leq (p(x)-1)|a-b|(|a|^{p(x)-1}+|b|^{p(x)-1}), p(x) \geq 2
\end{equation}

\section{Functional Analytic Setup}
The problem (1.1) considered as
\begin{align*}
\begin{split}
-\Delta_{p(x,y)}^{s(x,y)}u &= \beta|u|^{\alpha(x)-2}u+\lambda f(x,u)\,\,\mbox{in}\,\,\Omega,\\
u &= 0\,\, \mbox{in}\,\, \mathbb{R}^{N}\setminus\Omega
\end{split}
\end{align*}
has the following assumptions on the nonlinear function $f(x,u)$ .\\
\vspace{1.5mm}
\hspace{-1.7mm}$(f_{1})$ $f(x,0)=0$ and $f(x,u)\in C^{1}(\Omega\times\mathbb{R},\mathbb{R})$ is positively homogeneous of degree $r^{+}-1$ i.e. $f(x,tu)=t^{r^{+}-1}f(x,u)\;\forall\;(x,u)\in\Omega\times\mathbb{R}, t > 0$.\\
\vspace{1.5mm}
\hspace{-1.7mm}$(f_{2})$ $F(x,u):\overline{\Omega}\times\mathbb{R}\rightarrow\mathbb{R}$ is homogeneous of degree $r^{+}$, where $F$ is the primitive of $f$.\\
\vspace{1.5mm}
\hspace{-1.7mm}$(f_{3})$ $\left|\frac{\partial f}{\partial s}\right|\leq\delta|s|^{r^{+}-2}$ for all $x\in\Omega, s\in \mathbb{R}$. Here $\delta$ is a sufficiently small positive number.\\
\vspace{1.5mm}
\hspace{-1.7mm}$(f_{4})$ $\underset{s\rightarrow\infty}{\text{lim}}\frac{f(x,s)}{s^{r^{+}-1}}=0$ uniformly a.e. $x\in\Omega$.\\
\noindent Here, $s(x,y)p(x,y) < N \;\forall\;(x,y)\in\overline{\Omega}\times\overline{\Omega}$ and $2 < \alpha^{-}\leq\alpha(x)\leq\alpha^{+} < p^{-}\leq p(x,y)\leq p^{+} < q^{+} < r^{+} < r^{+2} < p_{s}^{*}(x)$. Also, $\alpha(x)\leq p(x,x)\;\forall\;x\in\overline{\Omega}$. The functional corresponding to the problem (1.1) is defined as
\begin{align*}
I_{\lambda,\beta}(u)=\int_{\mathbb{R}^N}\int_{\mathbb{R}^N}\frac{|u(x)-u(y)|^{p(x,y)}}{p(x,y)|x-y|^{N+s(x,y)p(x,y)}}dx dy-\beta\int_{\Omega}\frac{|u|^{\alpha(x)}}{\alpha(x)}dx-\lambda\int_{\Omega}F(x,u)dx
\end{align*}
A function $u\in W_{0}^{s(x,y),p(x,y)}(\overline{\Omega})$ is said to be a weak solution to the problem (1.1) if $\forall\;v\in W_{0}^{s(x,y),p(x,y)}(\overline{\Omega})$ it satisfies the following.
\begin{align}
\begin{split}
\int_{\mathbb{R}^{N}}\int_{\mathbb{R}^{N}}\frac{|u(x)-u(y)|^{p(x,y)-2}(u(x)-u(y))(v(x)-v(y))}{|x-y|^{N+s(x,y)p(x,y)}}dx dy=\beta\int_{\Omega}|u|^{\alpha(x)}uv dx+\lambda f(x,u)v dx
\end{split}
\end{align}
The fiber maps corresponding to the functional $I_{\lambda,\beta}$ and its derivatives are defined as follows.
\begin{align*}
\begin{split}
I_{\lambda,\beta}(tu)\;\;\;&=\int_{\mathbb{R}^N}\int_{\mathbb{R}^N}\frac{t^{p(x,y)}{|u(x)-u(y)|}^{p(x,y)}}{p(x,y)|x-y|^{N+s(x,y)p(x,y)}}dx dy-\beta\int_{\Omega}\frac{t^{\alpha(x)}|u|^{\alpha(x)}}{\alpha(x)}dx\\
\MoveEqLeft\hspace{9mm}-\lambda t^{r^+}\int_{\Omega}F(x,u)dx\\
\frac{d}{dt}I_{\lambda,\beta}(tu)&=\int_{\mathbb{R}^N}\int_{\mathbb{R}^N}\frac{t^{p(x,y)-1}{|u(x)-u(y)|}^{p(x,y)}}{|x-y|^{N+s(x,y)p(x,y)}}dx dy-\beta\int_{\Omega}t^{\alpha(x)-1}|u|^{\alpha(x)}dx\\
\MoveEqLeft\hspace{9mm}-\lambda r^{+} t^{r^{+}-1}\int_{\Omega}F(x,u)dx\\
\left.\frac{d}{dt}I_{\lambda,\beta}(tu)\right|_{t=1}&=\int_{\mathbb{R}^N}\int_{\mathbb{R}^N}\frac{{|u(x)-u(y)|}^{p(x,y)}}{|x-y|^{N+s(x,y)p(x,y)}}dx dy-\beta\int_{\Omega}|u|^{\alpha(x)}dx-\lambda r^{+}\int_{\Omega}F(x,u)dx\\
\end{split}
\end{align*}
\begin{align*}
\begin{split}
\left.\frac{d^2}{dt^2}I_{\lambda,\beta}(tu)\right|_{t=1}&=\int_{\mathbb{R}^N}\int_{\mathbb{R}^N}(p(x,y)-1)\frac{{|u(x)-u(y)|}^{p(x,y)}}{|x-y|^{N+s(x,y)p(x,y)}}dx dy-\beta\int_{\Omega}(\alpha(x)-1)|u|^{\alpha(x)}dx\\
&-\lambda r^{+}(r^{+}-1)\int_{\Omega}F(x,u)dx
\end{split}
\end{align*}
\section{Existence Results}
We will first define the Nehari manifold as
$\mathcal{N}=\left\lbrace u\in X_{0}\setminus\left\lbrace 0\right\rbrace :\langle I^{\prime}_{\lambda,\beta}(u),u\rangle=0\right\rbrace.$ We will also define 
$$\mathcal{N}^{+}= \left\lbrace u\in\mathcal{N}:\frac{d^2}{dt^2}I_{\lambda,\beta}(tu)|_{t=1} > 0\right\rbrace ,$$
$$\mathcal{N}^{0}= \left\lbrace u\in\mathcal{N}:\frac{d^2}{dt^2}I_{\lambda,\beta}(tu)|_{t=1} = 0\right\rbrace $$ and $$\mathcal{N}^{-}= \left\lbrace u\in\mathcal{N}:\frac{d^2}{dt^2}I_{\lambda,\beta}(tu)|_{t=1} < 0\right\rbrace .$$ It can be apparently seen that if $u \in X_{0}, t_{0} > 0$ then $t_{0}u\in\mathcal{N}$ iff $\frac{d}{dt}I_{\lambda,\beta}(t_{0}u)=0$. We will prove all the upcoming results for $\left\|u\right\| < 1$.
\begin{lemma}\label{lem1}
The functional $I_{\lambda,\beta}$ is coercive and bounded below over $\mathcal{N}$.
\end{lemma}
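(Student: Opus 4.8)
The plan is to use the Nehari constraint to remove from the energy the term carrying the highest homogeneity exponent, namely $\lambda\int_{\Omega}F(x,u)\,dx$; this is precisely the term that cannot be controlled by the leading Gagliardo seminorm (since $r^{+}>p^{+}$) and would otherwise obstruct coercivity. For $u\in\mathcal{N}$ one has $\langle I'_{\lambda,\beta}(u),u\rangle=0$, and using Euler's identity $\int_{\Omega}f(x,u)u\,dx=r^{+}\int_{\Omega}F(x,u)\,dx$ for the $r^{+}$-homogeneous primitive $F$ (which follows from $(f_{1})$–$(f_{2})$), this constraint reads
\[
\rho_{X_{0}}(u)=\beta\int_{\Omega}|u|^{\alpha(x)}\,dx+\lambda r^{+}\int_{\Omega}F(x,u)\,dx .
\]
I would then write $I_{\lambda,\beta}(u)=I_{\lambda,\beta}(u)-\tfrac{1}{r^{+}}\langle I'_{\lambda,\beta}(u),u\rangle$: the $F$-contributions cancel exactly, leaving
\[
I_{\lambda,\beta}(u)=\int_{\mathbb{R}^N}\int_{\mathbb{R}^N}\Big(\tfrac{1}{p(x,y)}-\tfrac{1}{r^{+}}\Big)\frac{|u(x)-u(y)|^{p(x,y)}}{|x-y|^{N+s(x,y)p(x,y)}}\,dx\,dy-\beta\int_{\Omega}\Big(\tfrac{1}{\alpha(x)}-\tfrac{1}{r^{+}}\Big)|u|^{\alpha(x)}\,dx .
\]

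Since $p(x,y)\le p^{+}<r^{+}$ and $\alpha^{-}\le\alpha(x)\le\alpha^{+}<r^{+}$, both bracketed factors are strictly positive; bounding them below (resp. above) by the constants $\tfrac{1}{p^{+}}-\tfrac{1}{r^{+}}>0$ and $\tfrac{1}{\alpha^{-}}-\tfrac{1}{r^{+}}>0$ gives
\[
I_{\lambda,\beta}(u)\ \ge\ \Big(\tfrac{1}{p^{+}}-\tfrac{1}{r^{+}}\Big)\rho_{X_{0}}(u)-\beta\Big(\tfrac{1}{\alpha^{-}}-\tfrac{1}{r^{+}}\Big)\int_{\Omega}|u|^{\alpha(x)}\,dx .
\]
Next I would estimate $\int_{\Omega}|u|^{\alpha(x)}\,dx$ from above. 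Because $1<\alpha(x)\le p(x,x)<p^{*}_{s}(x)$, Theorem 2.3 gives the continuous (indeed compact) embedding $X_{0}\hookrightarrow L^{\alpha(x)}(\Omega)$, so $\|u\|_{L^{\alpha(x)}(\Omega)}\le K\|u\|$; combining this with Lemma 2.5 applied with $\mu=\alpha$, $\nu\equiv1$ yields $\int_{\Omega}|u|^{\alpha(x)}\,dx\le\|u\|_{L^{\alpha(x)}}^{\alpha^{-}}+\|u\|_{L^{\alpha(x)}}^{\alpha^{+}}\le C\big(\|u\|^{\alpha^{-}}+\|u\|^{\alpha^{+}}\big)$. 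Comparing $\rho_{X_{0}}(u)$ with a power of $\|u\|$ via Lemma 2.4 ($\rho_{X_{0}}(u)\ge\|u\|^{p^{+}}$ when $\|u\|<1$ and $\rho_{X_{0}}(u)\ge\|u\|^{p^{-}}$ when $\|u\|>1$), I obtain $I_{\lambda,\beta}(u)\ge\psi(\|u\|)$ with $\psi$ continuous on $[0,\infty)$ and, since $\alpha^{+}<p^{-}\le p^{+}$, satisfying $\psi(t)\to+\infty$ as $t\to\infty$.

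Coercivity of $I_{\lambda,\beta}$ over $\mathcal{N}$ is then immediate, and boundedness from below follows since a continuous function on $[0,\infty)$ tending to $+\infty$ at infinity has a finite infimum; in particular on the regime $\|u\|<1$ of interest one has $\psi(t)=\big(\tfrac{1}{p^{+}}-\tfrac{1}{r^{+}}\big)t^{p^{+}}-C\big(t^{\alpha^{-}}+t^{\alpha^{+}}\big)$, manifestly bounded below on $[0,1]$. The conceptual heart of the argument is the cancellation of the $F$-term through the Nehari identity; everything afterwards is the routine estimate above, and the only delicate point is the variable-exponent bookkeeping — tracking whether $\alpha^{\pm}$ or $p^{\pm}$ is the operative exponent according as the relevant norms lie below or above $1$, and making sure the strict gap $\alpha^{+}<p^{-}$ is what guarantees the Gagliardo term ultimately dominates.
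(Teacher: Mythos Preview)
Your argument is correct and follows the same core strategy as the paper: use the Nehari identity to eliminate the $F$--term (the only one whose homogeneity exceeds $p^{+}$), leaving a competition between the Gagliardo modular and $\int_{\Omega}|u|^{\alpha(x)}dx$, and then compare exponents.

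The one substantive difference is in how the last comparison is closed. The paper, working throughout under the standing convention $\|u\|<1$, simply writes
\[
I_{\lambda,\beta}(u)\ge\Big(\tfrac{1}{p^{+}}-\tfrac{1}{r^{+}}\Big)\rho_{X_{0}}(u)+\beta\Big(\tfrac{1}{r^{+}}-\tfrac{1}{\alpha^{-}}\Big)\int_{\Omega}|u|^{\alpha(x)}dx
\]
and then \emph{chooses $\beta\in(0,\beta_{1})$ small} so that the right-hand side is $\ge c\|u\|^{p^{+}}$; this smallness of $\beta$ is used again in Lemma~4.3. You instead keep $\beta>0$ arbitrary, invoke the embedding $X_{0}\hookrightarrow L^{\alpha(x)}(\Omega)$ together with Lemma~2.5 to bound $\int_{\Omega}|u|^{\alpha(x)}dx\le C(\|u\|^{\alpha^{-}}+\|u\|^{\alpha^{+}})$, and exploit the strict gap $\alpha^{+}<p^{-}$ to force coercivity as $\|u\|\to\infty$; boundedness below then follows from continuity of $\psi$. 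Your version is thus more careful with the variable-exponent bookkeeping and does not need the restriction on $\beta$, while the paper's shortcut trades that generality for the sharper positive lower bound $I_{\lambda,\beta}(u)\ge c\|u\|^{p^{+}}$ that it later relies on.
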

\begin{proof}
Suppose $u\in \mathcal{N}$. Then we have the following
\begin{align}
\int_{\mathbb{R}^N}\int_{\mathbb{R}^N}\frac{|u(x)-u(y)|^{p(x,y)}}{|x-y|^{N+s(x,y)p(x,y)}}dx dy-\beta\int_{\Omega}|u|^{\alpha(x)}dx-\lambda r^{+}\int_{\Omega}F(x,u)dx=0.
\end{align}
Using equation (4.1) in the functional $I_{\lambda,\beta}$, we get
\begin{align*}
\begin{split}
I_{\lambda,\beta}(u)=&\int_{\mathbb{R}^N}\int_{\mathbb{R}^N}\frac{|u(x)-u(y)|^{p(x,y)}}{p(x,y)|x-y|^{N+s(x,y)p(x,y)}}dx dy-\beta\int_{\Omega}\frac{|u|^{\alpha(x)}}{\alpha(x)}dx-\lambda\int_{\Omega}F(x,u)dx\\
\geq& \frac{1}{p^{+}}\int_{\mathbb{R}^N}\int_{\mathbb{R}^N}\frac{|u(x)-u(y)|^{p(x,y)}}{|x-y|^{N+s(x,y)p(x,y)}}dx dy-\frac{\beta}{\alpha^{-}}\int_{\Omega}|u|^{\alpha(x)}dx-\lambda\int_{\Omega}F(x,u)dx\\
\geq&\left(\frac{1}{p^+}-\frac{1}{r^+}\right)\int_{\mathbb{R}^N}\int_{\mathbb{R}^N}\frac{|u(x)-u(y)|^{p(x,y)}}{|x-y|^{N+s(x,y)p(x,y)}}dx dy+\beta\left(\frac{1}{r^+}-\frac{1}{\alpha^-}\right)\int_{\Omega}|u|^{\alpha(x)}dx
\end{split}
\end{align*}
Choose $\beta_{1}$ small enough such that for every $\beta\in(0,\beta_{1})$ we get $I_{\lambda,\beta}(u) \geq c\left\|u\right\|^{p^+}$, where $c > 0$. Hence, $I_{\lambda,\beta}$ is coercive and bounded below over $\mathcal{N}$.
\end{proof}
\begin{lemma}
The set $\mathcal{N}^{0}=\left\lbrace u\in\mathcal{N}:\frac{d^2}{dt^{2}}I_{\lambda,\beta}(tu)|_{t=1}=0\right\rbrace =\phi$.
\end{lemma}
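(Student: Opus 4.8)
The plan is to argue by contradiction. Suppose $\mathcal{N}^{0}\neq\phi$ and pick $u\in\mathcal{N}^{0}$. To keep the bookkeeping manageable, abbreviate
$$A:=\int_{\mathbb{R}^{N}}\int_{\mathbb{R}^{N}}\frac{|u(x)-u(y)|^{p(x,y)}}{|x-y|^{N+s(x,y)p(x,y)}}\,dx\,dy=\rho_{X_{0}}(u),\quad B:=\beta\int_{\Omega}|u|^{\alpha(x)}\,dx,\quad C:=\lambda r^{+}\int_{\Omega}F(x,u)\,dx,$$
together with the weighted quantities $\widetilde{A}:=\int_{\mathbb{R}^{N}}\int_{\mathbb{R}^{N}}(p(x,y)-1)\frac{|u(x)-u(y)|^{p(x,y)}}{|x-y|^{N+s(x,y)p(x,y)}}\,dx\,dy$ and $\widetilde{B}:=\beta\int_{\Omega}(\alpha(x)-1)|u|^{\alpha(x)}\,dx$. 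Since $u\not\equiv0$ and $u$ vanishes outside the bounded set $\Omega$, the modular $A=\rho_{X_{0}}(u)$ is strictly positive. The membership $u\in\mathcal{N}$ reads $\tfrac{d}{dt}I_{\lambda,\beta}(tu)|_{t=1}=0$, i.e. $A=B+C$ (this is exactly the relation already used in the proof of Lemma~\ref{lem1}), and the extra condition $u\in\mathcal{N}^{0}$ reads $\tfrac{d^{2}}{dt^{2}}I_{\lambda,\beta}(tu)|_{t=1}=0$, i.e. $\widetilde{A}-\widetilde{B}=(r^{+}-1)C$.

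The next step is to eliminate $C$. Substituting $C=A-B$ into the second identity gives $\widetilde{A}-\widetilde{B}=(r^{+}-1)(A-B)$. Using the pointwise bounds $0<p^{-}-1\le p(x,y)-1$ and $\alpha(x)-1\le\alpha^{+}-1$ together with nonnegativity of the integrands, one has $\widetilde{A}\ge(p^{-}-1)A$ and $\widetilde{B}\le(\alpha^{+}-1)B$, whence
$$(r^{+}-1)(A-B)=\widetilde{A}-\widetilde{B}\ge(p^{-}-1)A-(\alpha^{+}-1)B.$$
Rearranging yields $(r^{+}-p^{-})A\ge(r^{+}-\alpha^{+})B$, that is, $A\ge\theta B$ with $\theta:=\dfrac{r^{+}-\alpha^{+}}{r^{+}-p^{-}}$. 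The ordering hypothesis $\alpha^{+}<p^{-}<r^{+}$ is used here decisively: it makes both coefficients positive and forces $\theta>1$ \emph{strictly}. This strict separation between the $\alpha$-homogeneity and the $p$-homogeneity is the structural fact that the whole argument rests on.

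Now I would feed this back into $A=B+C$. Because $B\ge0$ and $B=A-C$, one has $B\ge A-|C|$, so $A\ge\theta B\ge\theta\bigl(A-|C|\bigr)$, which gives $|C|\ge\frac{\theta-1}{\theta}A>0$. On the other hand, integrating $(f_{3})$ twice (using $f(x,0)=0$) yields $|f(x,s)|\le\frac{\delta}{r^{+}-1}|s|^{r^{+}-1}$ and then $|F(x,s)|\le\frac{\delta}{r^{+}(r^{+}-1)}|s|^{r^{+}}$, so that
$$|C|\le\lambda r^{+}\int_{\Omega}|F(x,u)|\,dx\le\frac{\lambda\delta}{r^{+}-1}\int_{\Omega}|u|^{r^{+}}\,dx.$$
Since $1<r^{+}<p^{\ast}_{s}(x)$, Theorem~2.2 furnishes a constant $K>0$ with $\|u\|_{L^{r^{+}}(\Omega)}\le K\|u\|$; as $\|u\|<1$ and $r^{+}>p^{+}$, Lemma~2.3 then gives $\int_{\Omega}|u|^{r^{+}}\,dx\le K^{r^{+}}\|u\|^{r^{+}}\le K^{r^{+}}\|u\|^{p^{+}}\le K^{r^{+}}A$. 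Combining the two estimates for $|C|$ and dividing by $A>0$ produces
$$\frac{\theta-1}{\theta}\le\frac{\lambda\delta\,K^{r^{+}}}{r^{+}-1}.$$

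This last inequality is impossible once $\lambda\delta<\dfrac{(\theta-1)(r^{+}-1)}{\theta K^{r^{+}}}$, a condition that holds because $\delta$ is taken sufficiently small in $(f_{3})$ (and $\lambda$ is restricted to the admissible range announced in the Theorem). Hence no $u\in\mathcal{N}^{0}$ can exist, i.e. $\mathcal{N}^{0}=\phi$. The one genuinely delicate point, already flagged above, is that the variable exponents prevent any direct diagonalisation of the fiber map, so the argument must descend to the scalar quantities $A,B,C$, eliminate $C$ to obtain the \emph{strict} comparison $A\ge\theta B$ with $\theta>1$, and only then invoke the smallness of $\delta$ (equivalently of $\lambda$) to close the two chains of estimates against each other --- merely bounding $\|u\|$ from above, which is all one gets from either identity in isolation, would not suffice.
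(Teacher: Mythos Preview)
Your argument is correct. Both you and the paper argue by contradiction and begin the same way: combine the Nehari identity $A=B+C$ with the second-order condition $\widetilde A-\widetilde B=(r^{+}-1)C$ to eliminate $C$. From that point on the two proofs diverge.

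The paper estimates in the \emph{opposite} direction, using $\widetilde A\le (p^{+}-1)A$ and $\widetilde B\ge(\alpha^{-}-1)B$, to obtain
\[
\|u_{0}\|^{p^{+}}\le A\le \frac{\beta(r^{+}-\alpha^{-})}{r^{+}-p^{+}}\int_{\Omega}|u_{0}|^{\alpha(x)}\,dx,
\]
and then closes the argument through the smallness of the parameter $\beta$. Your route instead uses $\widetilde A\ge(p^{-}-1)A$ and $\widetilde B\le(\alpha^{+}-1)B$ to get the strict separation $A\ge\theta B$ with $\theta=\frac{r^{+}-\alpha^{+}}{r^{+}-p^{-}}>1$, which forces a \emph{lower} bound $|C|\ge\frac{\theta-1}{\theta}A$; you then confront this with the \emph{upper} bound on $|C|$ coming from $(f_{3})$, the embedding, and the modular inequality, reaching a contradiction for $\lambda\delta$ small. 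So the paper pivots on $\beta$ while you pivot on $\lambda\delta$; your version has the advantage of making the role of the smallness hypothesis in $(f_{3})$ completely explicit and of producing a clean quantitative threshold $\lambda\delta<\frac{(\theta-1)(r^{+}-1)}{\theta K^{r^{+}}}$, whereas the paper's concluding step (``$\forall\,\beta>0$ \dots\ hence $\|u_{0}\|=0$'') is terser. One minor remark: you rely on the standing convention $\|u\|<1$ (for the modular inequality and for $\|u\|^{r^{+}}\le\|u\|^{p^{+}}$), which the paper adopts globally just before Lemma~\ref{lem1}; it would be worth flagging this explicitly in your write-up.
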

\begin{proof}
The proof is by contradiction. Let there exist a $u_{0}(\neq 0)\in \mathcal{N}$ such that\\ $\frac{d^2}{dt^{2}}I_{\lambda,\beta}(tu_{0})|_{t=1}=0$. From this we get that 
\begin{align}
\begin{split}
&\int_{\mathbb{R}^N}\int_{\mathbb{R}^N}(p(x,y)-1)\frac{|u_{0}(x)-u_{0}(y)|^{p(x,y)}}{|x-y|^{N+s(x,y)p(x,y)}}dx dy-\beta\int_{\Omega}(\alpha(x)-1)|u_{0}|^{\alpha(x)}dx\\
&-\lambda r^{+}(r^{+}-1)\int_{\Omega}F(x,u_{0})dx=0.
\end{split}
\end{align}
Since $u_{0}\in\mathcal{N}$ so we have
\begin{align}
\int_{\mathbb{R}^N}\int_{\mathbb{R}^N}\frac{|u_{0}(x)-u_{0}(y)|^{p(x,y)}}{|x-y|^{N+s(x,y)p(x,y)}}dx dy-\beta\int_{\Omega}|u_{0}|^{\alpha(x)}dx-\lambda r^{+}\int_{\Omega}F(x,u_{0})dx=0
\end{align}
Using (4.3) in (4.2), we get
\begin{align*}
(p^{+}-r^{+})\int_{\mathbb{R}^N}\int_{\mathbb{R}^N}\frac{|u_{0}(x)-u_{0}(y)|^{p(x,y)}}{|x-y|^{N+s(x,y)p(x,y)}}dx dy+\beta(r^{+}-\alpha^{-})\int_{\Omega}|u_{0}|^{\alpha(x)}dx \geq 0
\end{align*}
Thus, 
\begin{align*}
\left\|u_{0}\right\|^{p^{+}}\leq \frac{\beta(r^{+}-\alpha^{-})\int_{\Omega}|u_{0}|^{\alpha(x)}dx}{(r^{+}-p^{+})}\;\forall\;\beta > 0.
\end{align*}
This implies $\left\|u_{0}\right\|=0$ which is a contradiction to our assumption that $\|u_{0}\|\neq 0$. Hence, the set $\mathcal{N}^{0}$ is empty.
\end{proof}
\noindent Define a function $$\phi(t)= \int_{\mathbb{R}^N}\int_{\mathbb{R}^N}t^{p(x,y)-r^{+}}\frac{|u(x)-u(y)|^{p(x,y)}}{|x-y|^{N+s(x,y)p(x,y)}}dx dy-\beta\int_{\Omega}t^{\alpha(x)-r^{+}}|u|^{\alpha(x)}dx.$$
On differentiating $\phi$ w.r.t $t$ we get 
\begin{align*}
\begin{split}
\phi^{\prime}(t)&= \int_{\mathbb{R}^N}\int_{\mathbb{R}^N}(p(x,y)-r^{+})t^{p(x,y)-r^{+}-1}\frac{|u(x)-u(y)|^{p(x,y)}}{|x-y|^{N+s(x,y)p(x,y)}}dx dy\\
&-\beta\int_{\Omega}(\alpha(x)-r^{+})t^{\alpha(x)-r^{+}}|u|^{\alpha(x)}dx.
\end{split}
\end{align*}
It is easy to see that $tu\in\mathcal{N}$ iff $\phi(t)=\lambda r^{+}\int_{\Omega}F(x,u)dx$. When $tu\in\mathcal{N}$ and $\frac{d}{dt}I_{\lambda,\beta}(tu)=0$, we have
\begin{align}
\lambda t^{r^{+}-1}r^{+}\int_{\Omega}F(x,u)dx=\int_{\mathbb{R}^N}\int_{\mathbb{R}^N}t^{p(x,y)-1}\frac{|u(x)-u(y)|^{p(x,y)}}{|x-y|^{N+s(x,y)p(x,y)}}dx dy-\beta\int_{\Omega}t^{\alpha(x)-1}|u|^{\alpha(x)}dx.
\end{align}
Using (4.4), we get
\begin{align*}
\begin{split}
\frac{d^2}{dt^2}I_{\lambda,\beta}(tu)=&\int_{\mathbb{R}^N}\int_{\mathbb{R}^N}(p(x,y)-1)t^{p(x,y)-2}\frac{|u(x)-u(y)|^{p(x,y)}}{|x-y|^{N+s(x,y)p(x,y)}}dx dy-(r^{+}-1)t^{-1}\\&\left(\int_{\mathbb{R}^N}\int_{\mathbb{R}^N}t^{p(x,y)-1}\frac{|u(x)-u(y)|^{p(x,y)}}{|x-y|^{N+s(x,y)p(x,y)}}dx dy-\beta\int_{\Omega}t^{\alpha(x)-1}|u|^{\alpha(x)}dx\right)\\
&-\beta\int_{\Omega}(\alpha(x)-1)t^{\alpha(x)-2}|u|^{\alpha(x)}dx\\
=&\int_{\mathbb{R}^N}\int_{\mathbb{R}^N}(p(x,y)-r^{+})t^{p(x,y)-2}\frac{|u(x)-u(y)|^{p(x,y)}}{|x-y|^{N+s(x,y)p(x,y)}}dx dy\\
&-\beta\int_{\Omega}(\alpha(x)-r^{+})t^{\alpha(x)-2}|u|^{\alpha(x)}dx
\end{split}
\end{align*} 
Hence, $t^{r^{+}-1}\phi^{\prime}(t)=\frac{d^2}{dt^2}I_{\lambda,\beta}(tu)$. Let us assume $t\in(0,1)$. Thus,
\begin{align*}
\begin{split}
\phi^{\prime}(t)&=\int_{\mathbb{R}^N}\int_{\mathbb{R}^N}(p(x,y)-r^{+})t^{p(x,y)-r^{+}-1}\frac{|u(x)-u(y)|^{p(x,y)}}{|x-y|^{N+s(x,y)p(x,y)}}dx dy\\
&-\beta\int_{\Omega}(\alpha(x)-r^{+})t^{\alpha(x)-r^{+}-1}|u|^{\alpha(x)}dx
\end{split}
\end{align*}
A simple computation leads to the fact that when $t > \left[\frac{\beta(r^{+}-\alpha^{+})\int_{\Omega}|u|^{\alpha(x)}dx}{(r^{+}-p^{-})\left\|u\right\|^{p^-}}\right]^{\frac{1}{p^{+}-\alpha^{-}}}=t_{1}$(say) then $\phi^{\prime}(t) < 0$ and $\phi^{\prime}(t) \geq 0$ when $t \leq \left[\frac{\beta(r^{+}-\alpha^{+})\int_{\Omega}|u|^{\alpha(x)}dx}{(r^{+}-p^{-})\left\|u\right\|^{p^+}}\right]^{\frac{1}{p^{-}-\alpha^{+}}}=t_{2}$(say).\\
Now there are two possibilities either $t_{1} > t_{2}$ or $t_{2} > t_{1}$. Since, $\phi^{\prime}(t)$ is continuous so in both the cases $\exists$ a $\tilde{t}$ such that $\phi^{\prime}(\tilde{t})=0$ and $\tilde{t}$ is a maximum point of $\phi(t)$. Further choose $\lambda > 0$ say $\lambda_{1}$ such that $\phi(\tilde{t}) > \lambda r^{+}\int_{\Omega}F(x,u)$. So, there exists $\tau_{1}, \tau_{2}$ in  the neighbourhood of $\tilde{t}$ such that $\phi^{\prime}(\tau_{1}u) > 0$ and $\phi^{\prime}(\tau_{2}u) < 0$. This implies $\frac{d^2}{dt^2}I_{\lambda,\beta}(\tau_{1}u) > 0$ and $\frac{d^2}{dt^2}I_{\lambda,\beta}(\tau_{2}u) < 0$ i.e. $\tau_{1}u\in \mathcal{N}^{+}$ and $\tau_{2}u\in \mathcal{N}^{-}$.\\

\noindent\textbf{Remark}: Since, the set $\mathcal{N}^{0}$ is empty so $\mathcal{N}=\mathcal{N}^{+}\cup\mathcal{N}^{-}$. Also $I_{\lambda,\beta}$ is bounded below on $\mathcal{N}$ so it is bounded below on both $\mathcal{N}^{+}$ and $\mathcal{N}^{-}$.
\begin{lemma}
There exists a minimizer of $I_{\lambda,\beta}$ in $\mathcal{N}^{+}$ which is also a solution to the problem (1.1) for $\beta\in(0,\beta_{1})$.
%, where $\Lambda=\emph{min}\lbrace \beta_{1},\beta_{2}\rbrace$.
\end{lemma}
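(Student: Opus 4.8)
The plan is to realise the solution as a global minimiser of $I_{\lambda,\beta}$ over $\mathcal{N}^{+}$ and then to upgrade it to a critical point of $I_{\lambda,\beta}$ on $X_{0}$ through the Lagrange multiplier rule. Set $m^{+}:=\inf_{u\in\mathcal{N}^{+}}I_{\lambda,\beta}(u)$; by the remark above and Lemma~\ref{lem1}, $I_{\lambda,\beta}$ is coercive and bounded below on $\mathcal{N}^{+}$, so $m^{+}>-\infty$, and in fact $m^{+}<0$: for every $u\in X_{0}\setminus\{0\}$ the fibre map $t\mapsto I_{\lambda,\beta}(tu)$ is strictly negative for small $t>0$ (for $t<1$ the term $-\beta\int_{\Omega}\frac{t^{\alpha(x)}}{\alpha(x)}|u|^{\alpha(x)}\,dx$ dominates, since $\alpha^{+}<p^{-}<r^{+}$), so the first critical point $\tau_{1}u\in\mathcal{N}^{+}$ furnished by the discussion preceding this lemma, being the associated local minimum of that fibre map, satisfies $I_{\lambda,\beta}(\tau_{1}u)<0$. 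Let $\{u_{n}\}\subset\mathcal{N}^{+}$ with $I_{\lambda,\beta}(u_{n})\to m^{+}$. Coercivity gives that $\{u_{n}\}$ is bounded in $X_{0}$; by reflexivity, $u_{n}\rightharpoonup u_{0}$ in $X_{0}$ along a subsequence, and the compact embedding recalled above yields $u_{n}\to u_{0}$ in $L^{\alpha(x)}(\Omega)$ and in $L^{r^{+}}(\Omega)$; by $(f_{1})$--$(f_{3})$ (which give $|f(x,s)|\le C|s|^{r^{+}-1}$ and $|F(x,s)|\le C|s|^{r^{+}}$) it follows that $\int_{\Omega}|u_{n}|^{\alpha(x)}\,dx\to\int_{\Omega}|u_{0}|^{\alpha(x)}\,dx$ and $\int_{\Omega}F(x,u_{n})\,dx\to\int_{\Omega}F(x,u_{0})\,dx$.

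Since the Gagliardo term of $I_{\lambda,\beta}$ is convex and strongly continuous, hence weakly sequentially lower semicontinuous, the two limits above give $I_{\lambda,\beta}(u_{0})\le\liminf_{n}I_{\lambda,\beta}(u_{n})=m^{+}<0=I_{\lambda,\beta}(0)$, so $u_{0}\ne0$. Next I would show $u_{0}\in\mathcal{N}^{+}$ with $I_{\lambda,\beta}(u_{0})=m^{+}$. Passing to the limit in the identity $\rho_{X_{0}}(u_{n})=\beta\int_{\Omega}|u_{n}|^{\alpha(x)}\,dx+\lambda r^{+}\int_{\Omega}F(x,u_{n})\,dx$ (valid since $u_{n}\in\mathcal{N}$) shows that $\lim_{n}\rho_{X_{0}}(u_{n})$ exists, and weak lower semicontinuity of $\rho_{X_{0}}$ then gives $\langle I^{\prime}_{\lambda,\beta}(u_{0}),u_{0}\rangle\le0$. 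If this inequality were strict, I would project $u_{0}$ onto $\mathcal{N}^{+}$: there is $\tau_{1}>0$ with $\tau_{1}u_{0}\in\mathcal{N}^{+}$, and passing to the limit (via Fatou's lemma on the nonnegative integrand, together with the strong $L^{\alpha(x)}$-convergence) in the strict inequality $\int_{\mathbb{R}^{N}}\int_{\mathbb{R}^{N}}\frac{(r^{+}-p(x,y))|u_{n}(x)-u_{n}(y)|^{p(x,y)}}{|x-y|^{N+s(x,y)p(x,y)}}\,dx\,dy<\beta\int_{\Omega}(r^{+}-\alpha(x))|u_{n}|^{\alpha(x)}\,dx$ characterising $u_{n}\in\mathcal{N}^{+}$, together with $\langle I^{\prime}_{\lambda,\beta}(u_{0}),u_{0}\rangle<0$ and the shape of the fibre map established before this lemma, forces $t=1$ to lie strictly to the left of $\tau_{1}$ on the decreasing branch of the fibre map of $u_{0}$; hence $I_{\lambda,\beta}(\tau_{1}u_{0})<I_{\lambda,\beta}(u_{0})\le m^{+}$, contradicting $\tau_{1}u_{0}\in\mathcal{N}^{+}$. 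Therefore $\langle I^{\prime}_{\lambda,\beta}(u_{0}),u_{0}\rangle=0$, i.e. $u_{0}\in\mathcal{N}$; then $\rho_{X_{0}}(u_{n})\to\rho_{X_{0}}(u_{0})$, so by uniform convexity of $X_{0}$, $u_{n}\to u_{0}$ strongly, whence $I_{\lambda,\beta}(u_{0})=\lim_{n}I_{\lambda,\beta}(u_{n})=m^{+}$ and $\left.\tfrac{d^{2}}{dt^{2}}I_{\lambda,\beta}(tu_{0})\right|_{t=1}=\lim_{n}\left.\tfrac{d^{2}}{dt^{2}}I_{\lambda,\beta}(tu_{n})\right|_{t=1}\ge0$; since $\mathcal{N}^{0}=\emptyset$, this second derivative is strictly positive, so $u_{0}\in\mathcal{N}^{+}$.

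It remains to check that $u_{0}$ is a weak solution of (1.1). The set $\mathcal{N}^{+}$ is relatively open in $\mathcal{N}=\{u\ne0:G(u)=0\}$, where $G(u):=\langle I^{\prime}_{\lambda,\beta}(u),u\rangle$ is of class $C^{1}$, so $u_{0}$ is a constrained local minimiser of $I_{\lambda,\beta}$ on $\mathcal{N}$; moreover $\langle G'(u_{0}),u_{0}\rangle=\left.\tfrac{d^{2}}{dt^{2}}I_{\lambda,\beta}(tu_{0})\right|_{t=1}>0$, so $u_{0}$ is a regular point of $G$ and the Lagrange multiplier theorem provides $\mu\in\mathbb{R}$ with $I^{\prime}_{\lambda,\beta}(u_{0})=\mu\,G'(u_{0})$. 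Testing with $u_{0}$ gives $0=\langle I^{\prime}_{\lambda,\beta}(u_{0}),u_{0}\rangle=\mu\langle G'(u_{0}),u_{0}\rangle$, and since $\langle G'(u_{0}),u_{0}\rangle\ne0$ we conclude $\mu=0$, hence $I^{\prime}_{\lambda,\beta}(u_{0})=0$. Thus $u_{0}$ is a nontrivial weak solution of (1.1), for $\beta\in(0,\beta_{1})$ (and $\lambda$ in the range dictated by the preceding discussion).

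The step I expect to be the main obstacle is ruling out the strict inequality $\langle I^{\prime}_{\lambda,\beta}(u_{0}),u_{0}\rangle<0$ (equivalently, proving $u_{n}\to u_{0}$ strongly in $X_{0}$). Because the operator is nonlocal and carries a variable exponent, the usual uniform-convexity shortcuts must be routed through the modular $\rho_{X_{0}}$ and the elementary inequalities (2.4)--(2.5), and the argument has to combine weak lower semicontinuity of the Gagliardo term, the exact convergence of the $\alpha(x)$- and $F$-terms granted by the compact embedding, and a careful analysis of the fibre maps from the previous discussion; this is also where the smallness of $\beta$ (and of $\delta$) is used.
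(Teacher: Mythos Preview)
Your overall strategy agrees with the paper's---take a minimising sequence in $\mathcal{N}^{+}$, extract a weak limit via coercivity and reflexivity, pass the lower-order terms through the compact embedding, and then argue strong convergence by a contradiction that ultimately rests on the existence of some $\tau_{1}u\in\mathcal{N}^{+}$ with $I_{\lambda,\beta}(\tau_{1}u)<0$---but you supply considerably more than the paper does. The paper's proof simply asserts that if $u_{n}\not\to u_{1}$ strongly then the weak lower semicontinuity chain forces $i^{+}>0$, which clashes with the negative energy just mentioned; having obtained strong convergence and hence $I_{\lambda,\beta}(u_{1})=i^{+}$, it stops, without checking that $u_{1}\in\mathcal{N}^{+}$ and without proving that $u_{1}$ solves (1.1), despite the lemma claiming both. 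Your route is a genuine refinement: you (i) replace the one-line contradiction by a fibre-map projection argument that pins down $\langle I'_{\lambda,\beta}(u_{0}),u_{0}\rangle=0$, whence modular convergence $\rho_{X_{0}}(u_{n})\to\rho_{X_{0}}(u_{0})$ and strong convergence follow, and then use $\mathcal{N}^{0}=\emptyset$ to place $u_{0}$ in $\mathcal{N}^{+}$; and (ii) add the Lagrange-multiplier step, observing that $\langle G'(u_{0}),u_{0}\rangle=\left.\tfrac{d^{2}}{dt^{2}}I_{\lambda,\beta}(tu_{0})\right|_{t=1}>0$ forces the multiplier to vanish, so $I'_{\lambda,\beta}(u_{0})=0$. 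Step (ii) in particular closes a real gap in the paper's presentation. One cautionary remark: your assertion $m^{+}<0$---and equally the paper's own use of $I_{\lambda,\beta}(\tau_{1}u)<0$ at the end of its proof---sits in tension with Lemma~\ref{lem1} as written there, which concludes $I_{\lambda,\beta}(u)\geq c\|u\|^{p^{+}}$ with $c>0$ on $\mathcal{N}$ for $\beta\in(0,\beta_{1})$; this is an inconsistency internal to the paper (only ``bounded below'' is actually needed, and only that is compatible with $m^{+}<0$), not a flaw in your argument.
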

\begin{proof}
Let $i^{+}=\underset{u\in\mathcal{N}^{+}}{\textrm{inf}}\left\lbrace I_{\lambda,\beta}(u)\right\rbrace$. Since, $I_{\lambda,\beta}$ is bounded below on $\mathcal{N}^{+}$, there exists a minimizing sequence say $(u_{n})$ in $\mathcal{N}^{+}$ such that $I_{\lambda,\beta}(u_{n})\rightarrow i^{+}$ as $n \rightarrow\infty$. Due to coercivity of $I_{\lambda,\beta}$ on $\mathcal{N}$, $(u_{n})$ is bounded in $\mathcal{N}\subset Y$. Since $Y$ is reflexive so $u_{n}\rightharpoonup u_{1}$(say) in $Y$ and by compact embedding, $u_{n}\rightarrow u_{1}$ in $L^{\alpha(x)}(\Omega)$. We will now show that $u_{n}\rightarrow u_{1}$ in $Y$. For if not, then by the weak lower semicontinuity of the norm we have
\begin{equation}
\int_{\mathbb{R}^N}\int_{\mathbb{R}^N}\frac{|u_{1}(x)-u_{1}(y)|^{p(x,y)}}{|x-y|^{N+s(x,y)p(x,y)}}dx dy \leq \underset{n\rightarrow\infty}{\underline{\textrm{lim}}}\int_{\mathbb{R}^N}\int_{\mathbb{R}^N}\frac{|u_{n}(x)-u_{n}(y)|^{p(x,y)}}{|x-y|^{N+s(x,y)p(x,y)}}dx dy
\end{equation}
Using the fact that $(u_{n})\in\mathcal{N}$, we get,
\begin{align*}
\begin{split}
I_{\lambda,\beta}(u_{n})&\geq \frac{1}{p^{+}}\int_{\mathbb{R}^N}\int_{\mathbb{R}^N}\frac{|u_{n}(x)-u_{n}(y)|^{p(x,y)}}{|x-y|^{N+s(x,y)p(x,y)}}dx dy-\frac{\beta}{\alpha^{-}}\int_{\Omega}|u_{n}|^{\alpha(x)}dx-\lambda\int_{\Omega}F(x,u_{n})dx\\
&=\frac{1}{p^{+}}\int_{\mathbb{R}^N}\int_{\mathbb{R}^N}\frac{|u_{n}(x)-u_{n}(y)|^{p(x,y)}}{|x-y|^{N+s(x,y)p(x,y)}}dx dy-\frac{\beta}{\alpha^{-}}\int_{\Omega}|u_{n}|^{\alpha(x)}dx+\frac{1}{r^{+}}\left[\beta\int_{\Omega}|u_{n}|^{\alpha(x)}dx\right.\\
&\left.\;\;\;\;-\int_{\mathbb{R}^N}\int_{\mathbb{R}^N}\frac{|u_{n}(x)-u_{n}(y)|^{p(x,y)}}{|x-y|^{N+s(x,y)p(x,y)}}dx dy\right] \\\
&=\left(\frac{1}{p^{+}}-\frac{1}{r^{+}}\right)\int_{\mathbb{R}^N}\int_{\mathbb{R}^N}\frac{|u_{n}(x)-u_{n}(y)|^{p(x,y)}}{|x-y|^{N+s(x,y)p(x,y)}}dx dy+\beta\left(\frac{1}{r^{+}}-\frac{1}{\alpha^{-}}\right)\int_{\Omega}|u_{n}|^{\alpha(x)}dx
\end{split}
\end{align*}
Letting limit $n\rightarrow\infty$ both sides, $\beta\in(0,\beta_{1})$ and using (4.5),
\begin{align*}
\begin{split}
i^{+}&\geq\left(\frac{1}{p^{+}}-\frac{1}{r^{+}}\right)\underset{n\rightarrow\infty}{\text{lim}}\int_{\mathbb{R}^N}\int_{\mathbb{R}^N}\frac{|u_{n}(x)-u_{n}(y)|^{p(x,y)}}{|x-y|^{N+s(x,y)p(x,y)}}dx dy+\beta\left(\frac{1}{r^{+}}-\frac{1}{\alpha^{-}}\right)\int_{\Omega}|u_{1}|^{\alpha(x)}dx\\
&\geq \left(\frac{1}{p^{+}}-\frac{1}{r^{+}}\right)\underset{n\rightarrow\infty}{\text{\underline{lim}}}\int_{\mathbb{R}^N}\int_{\mathbb{R}^N}\frac{|u_{n}(x)-u_{n}(y)|^{p(x,y)}}{|x-y|^{N+s(x,y)p(x,y)}}dx dy+\beta\left(\frac{1}{r^{+}}-\frac{1}{\alpha^{-}}\right)\int_{\Omega}|u_{1}|^{\alpha(x)}dx\\
& \geq \left(\frac{1}{p^{+}}-\frac{1}{r^{+}}\right)\int_{\mathbb{R}^N}\int_{\mathbb{R}^N}\frac{|u_{1}(x)-u_{1}(y)|^{p(x,y)}}{|x-y|^{N+s(x,y)p(x,y)}}dx dy+\beta\left(\frac{1}{r^{+}}-\frac{1}{\alpha^{-}}\right)\int_{\Omega}|u_{1}|^{\alpha(x)}dx\\
& > 0
\end{split}
\end{align*}
We already know that there exists $\tau_{1}u\in\mathcal{N}^{+}$ such that $I_{\lambda,\beta}(\tau_{1}u) < 0$. This is a contradiction to our assumption that $u_{n}$ is not convergent to $u_{1}$. Thus, $u_{n}\rightarrow u_{1}$ in $Y$ and $I_{\lambda,\beta}(u_{1})=\underset{n\rightarrow\infty}{\text{lim}}I_{\lambda,\beta}(u_{n})=\underset{u\in\mathcal{N}^{+}}{\textrm{inf}}\left\lbrace I_{\lambda,\beta}(u)\right\rbrace$. This proves that $I_{\lambda,\beta}$ has a minimizer in $\mathcal{N}^{+}$.
\end{proof}
\begin{lemma}
There exists a minimizer of $I_{\lambda,\beta}$ in $\mathcal{N}^{-}$ which is also a solution of the problem for $\beta\in(0,\beta_{2})$.
\end{lemma}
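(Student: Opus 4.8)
The plan is to mirror the proof of the previous lemma, the one difference being that here the infimum
\[
i^{-}:=\inf_{u\in\mathcal{N}^{-}}I_{\lambda,\beta}(u)
\]
is nonnegative rather than negative, so the loss of compactness along a minimizing sequence cannot be excluded by a sign argument and must be handled through the fibre map. I would use the fibre-map analysis carried out above: since $\mathcal{N}^{0}=\emptyset$ and $\lambda$ has already been restricted so that $\phi(\tilde t)>\lambda r^{+}\int_{\Omega}F(x,u)$, every $w\neq 0$ admits a unique $t^{-}(w)>0$ with $t^{-}(w)w\in\mathcal{N}^{-}$, the map $w\mapsto t^{-}(w)$ is continuous, and $t\mapsto I_{\lambda,\beta}(tw)$ is maximised over $[t^{+}(w),\infty)$ exactly at $t=t^{-}(w)$. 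We further restrict $\beta$ to an interval $(0,\beta_{2})$ with $\beta_{2}\le\beta_{1}$, which keeps $I_{\lambda,\beta}$ coercive on $\mathcal{N}$ (Lemma~\ref{lem1}) and, as checked below, makes $i^{-}>0$, so that the minimiser obtained will be distinct from the $\mathcal{N}^{+}$ minimiser of the previous lemma.

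First I would check $i^{-}>0$. For $u\in\mathcal{N}^{-}$, combining the Nehari identity (equation (4.1) applied to $u$) with $\frac{d^{2}}{dt^{2}}I_{\lambda,\beta}(tu)|_{t=1}<0$, and using $\alpha(x)\le\alpha^{+}$ to remove the $|u|^{\alpha(x)}$ term, one gets
\[
\int_{\mathbb{R}^{N}}\int_{\mathbb{R}^{N}}\frac{(p(x,y)-\alpha^{+})\,|u(x)-u(y)|^{p(x,y)}}{|x-y|^{N+s(x,y)p(x,y)}}\,dx\,dy<\lambda r^{+}(r^{+}-\alpha^{+})\int_{\Omega}F(x,u)\,dx .
\]
Since $p^{-}>\alpha^{+}$ the left side is at least $(p^{-}-\alpha^{+})\rho_{X_{0}}(u)\ge(p^{-}-\alpha^{+})\|u\|^{p^{+}}$ (for $\|u\|<1$), while $(f_{3})$ gives $\int_{\Omega}F(x,u)\le C\|u\|^{r^{+}}$ via the compact embedding theorem; as $r^{+}>p^{+}$ this forces a uniform lower bound $\|u\|\ge\delta_{0}>0$ on $\mathcal{N}^{-}$, and then the estimate $I_{\lambda,\beta}(u)\ge c\|u\|^{p^{+}}$ of Lemma~\ref{lem1} yields $i^{-}>0$. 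Now take a minimizing sequence $(u_{n})\subset\mathcal{N}^{-}$ for $i^{-}$. By coercivity it is bounded in $X_{0}$, so along a subsequence $u_{n}\rightharpoonup u_{2}$ in $X_{0}$ and, by compact embedding, $u_{n}\to u_{2}$ in $L^{\alpha(x)}(\Omega)$ and in $L^{r^{+}}(\Omega)$. If $u_{2}=0$ then $\int_{\Omega}|u_{n}|^{\alpha(x)}\to 0$ and, using $(f_{3})$ to bound $|F(x,s)|\le C|s|^{r^{+}}$, also $\int_{\Omega}F(x,u_{n})\to 0$, so the Nehari identity gives $\rho_{X_{0}}(u_{n})\to 0$, i.e. $\|u_{n}\|\to 0$, contradicting $\|u_{n}\|\ge\delta_{0}$; hence $u_{2}\neq 0$.

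The crux is to upgrade $u_{n}\rightharpoonup u_{2}$ to strong convergence. Since $u_{n}\in\mathcal{N}^{-}$ we have $t^{-}(u_{n})=1$ and $I_{\lambda,\beta}(u_{n})=\max_{t\ge t^{+}(u_{n})}I_{\lambda,\beta}(tu_{n})$. Put $s_{0}:=t^{-}(u_{2})$; by continuity of $t^{+}$ and the convergences already established, $s_{0}\ge t^{+}(u_{n})$ for all large $n$. As $\rho_{X_{0}}$ is weakly lower semicontinuous while the remaining terms pass to the limit by compactness,
\[
i^{-}\le I_{\lambda,\beta}(s_{0}u_{2})\le\liminf_{n}I_{\lambda,\beta}(s_{0}u_{n})\le\liminf_{n}I_{\lambda,\beta}(u_{n})=i^{-},
\]
so equality holds throughout; in particular $\liminf_{n}\rho_{X_{0}}(s_{0}u_{n})=\rho_{X_{0}}(s_{0}u_{2})$, which together with $u_{n}\rightharpoonup u_{2}$ and the modular–norm relations for $\rho_{X_{0}}$ forces $u_{n}\to u_{2}$ in $X_{0}$. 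Therefore $t^{-}(u_{2})=\lim_{n}t^{-}(u_{n})=1$, i.e. $u_{2}\in\mathcal{N}^{-}$ and $I_{\lambda,\beta}(u_{2})=i^{-}$.

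Finally, $u_{2}$ minimises $I_{\lambda,\beta}$ on the $C^{1}$ constraint manifold $\mathcal{N}$ and $u_{2}\notin\mathcal{N}^{0}=\emptyset$, so the Lagrange multiplier rule for $G(u):=\langle I'_{\lambda,\beta}(u),u\rangle$ gives $I'_{\lambda,\beta}(u_{2})=\mu\,G'(u_{2})$ for some $\mu\in\mathbb{R}$; pairing with $u_{2}$ yields $0=\mu\,\langle G'(u_{2}),u_{2}\rangle=\mu\,\frac{d^{2}}{dt^{2}}I_{\lambda,\beta}(tu_{2})|_{t=1}$, and since the last factor is nonzero ($u_{2}\in\mathcal{N}^{-}$) we get $\mu=0$, so $u_{2}$ is a weak solution of (1.1). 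Since $I_{\lambda,\beta}(u_{2})=i^{-}>0$ while the $\mathcal{N}^{+}$ minimiser has negative energy, the two solutions are distinct and nontrivial, for $\beta\in(0,\beta_{2})$. I expect the strong-convergence step to be the main obstacle: $\mathcal{N}^{-}$ is not weakly closed, and because the variable exponents prevent the fibre map $t\mapsto I_{\lambda,\beta}(tw)$ from being an explicit polynomial, establishing the well-definedness, continuity and maximising property of the projection $w\mapsto t^{-}(w)w$, and then converting convergence of the modular into norm convergence, is precisely where the structure of $X_{0}$ and the restriction of $\beta$ (hence the exact value of $\beta_{2}$) are needed.
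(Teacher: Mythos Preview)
Your proposal follows the classical Nehari-manifold route---minimizing sequence, fibre-map projection $w\mapsto t^{-}(w)w$, and a strong-convergence upgrade via weak lower semicontinuity of the modular---and, modulo the technical caveats you yourself flag at the end, is a reasonable strategy. The paper, however, proves Lemma~4.4 by a \emph{completely different} mechanism that is in fact the advertised novelty of the article: it first shows (Lemma~4.5) that $(-\Delta_{p(x,y)}^{s(x,y)})^{-1}$ is a well-defined homeomorphism from $W^{-s,p'}$ to $X_{0}$ with an explicit H\"older estimate (equations (4.8)--(4.9)), then defines
\[
\Phi_{s,p}(u)=(-\Delta_{p(x,y)}^{s(x,y)})^{-1}\bigl(\beta|u|^{\alpha(x)-2}u+\lambda f(x,u)\bigr),
\]
and proves that for $\lambda\in(0,\lambda_{2})$, $\beta\in(0,\beta_{2})$ this map is a contraction on a small closed ball $M$ centred at the point $u_{0}=\tau_{2}u\in\mathcal{N}^{-}$ already produced by the fibre-map analysis; the Banach fixed-point theorem then yields the solution $u_{2}\in M$, which is declared to lie in $\mathcal{N}^{-}$ by proximity to $u_{0}$. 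So in the paper the value $\beta_{2}$ is determined by the contraction estimate (4.14), not by any minimizing-sequence compactness. Your approach has the advantage of actually delivering a \emph{minimizer} (which the fixed-point argument does not), and it avoids the heavy machinery of inverting the nonlocal operator; the paper's approach, on the other hand, sidesteps precisely the delicate strong-convergence and continuity-of-$t^{\pm}$ issues in the variable-exponent setting that you correctly identify as the main obstacle in your final paragraph.
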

%\noindent Since, we are working in variable exponent space involving an equation with a general nonlinear term, so proving this lemma by already known method is difficult. Hence, we will take the help of Banach fixed point theorem. Before that we will state and prove some lemmas.
\noindent We now state and prove some lemmas.
\begin{lemma}
Let $p(x,y),s(x,y)$ as in Theorem 2.3. Then for each $f$ in $W_{0}^{-s(x,y),p^{\prime}(x,y)}(\Omega)$, the following problem 
\begin{align}
\begin{split}
(-\Delta)_{p(x,y)}^{s(x,y)}u &= f\,\,\mbox{in}\,\,\Omega,\\
u &= 0\,\, \mbox{in}\,\, \mathbb{R}^{N}\setminus\Omega
\end{split}
\end{align}
has a unique weak solution.
\end{lemma}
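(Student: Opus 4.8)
The plan is to produce the weak solution as the unique minimizer of the strictly convex energy functional naturally attached to $(-\Delta)_{p(x,y)}^{s(x,y)}$ and the datum $f$. Write $X_{0}=W_{0}^{s(x,y),p(x,y)}(\Omega)$, let $\langle\cdot,\cdot\rangle$ denote the duality pairing of $W_{0}^{-s(x,y),p'(x,y)}(\Omega)$ with $X_{0}$, and set $J:X_{0}\to\mathbb{R}$,
\[
J(u)=\int_{\mathbb{R}^{N}}\int_{\mathbb{R}^{N}}\frac{|u(x)-u(y)|^{p(x,y)}}{p(x,y)\,|x-y|^{N+s(x,y)p(x,y)}}\,dx\,dy-\langle f,u\rangle .
\]
The first step is to check, by the standard arguments for modulars of this kind together with the elementary inequalities (2.3)--(2.4) and the variable-exponent Hölder inequality, that $J$ is well defined and of class $C^{1}$ on $X_{0}$ with
\[
\langle J'(u),v\rangle=\int_{\mathbb{R}^{N}}\int_{\mathbb{R}^{N}}\frac{|u(x)-u(y)|^{p(x,y)-2}(u(x)-u(y))(v(x)-v(y))}{|x-y|^{N+s(x,y)p(x,y)}}\,dx\,dy-\langle f,v\rangle ,
\]
so that the weak solutions of problem~(4.6) are precisely the critical points of $J$.

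\textbf{Existence (direct method).} I would show $J$ is coercive and weakly lower semicontinuous on the reflexive space $X_{0}$ (Theorem~2.3, \cite{Biswas}). Coercivity follows from $J(u)\ge\frac{1}{p^{+}}\rho_{X_{0}}(u)-\|f\|_{*}\|u\|$ together with Lemma~2.4: for $\|u\|>1$ one has $\rho_{X_{0}}(u)\ge\|u\|^{p^{-}}$, hence $J(u)\ge\frac{1}{p^{+}}\|u\|^{p^{-}}-\|f\|_{*}\|u\|\to\infty$ as $\|u\|\to\infty$, since $p^{-}>1$ (here $\|f\|_{*}$ is the norm of $f$ in $W_{0}^{-s(x,y),p'(x,y)}(\Omega)$). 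For weak lower semicontinuity, $u\mapsto\rho_{X_{0}}(u)$ is convex, because $t\mapsto|t|^{p(x,y)}$ is convex for $p(x,y)\ge1$, and strongly continuous, hence weakly lower semicontinuous, while $u\mapsto\langle f,u\rangle$ is weakly continuous. A minimizing sequence for $J$ is therefore bounded, has a weakly convergent subsequence, and its weak limit $u\in X_{0}$ attains $\min_{X_{0}}J$; consequently $J'(u)=0$ and $u$ is a weak solution of problem~(4.6).

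\textbf{Uniqueness.} I would argue by strict convexity of $J$. Since $t\mapsto|t|^{p(x,y)}$ is strictly convex for $p(x,y)>1$, the map $u\mapsto|u(x)-u(y)|^{p(x,y)}$ is convex in $u$ and is strictly convex along any direction $w=u_{1}-u_{2}$ with $u_{1}\neq u_{2}$ in $X_{0}$, because $w(x)-w(y)$ cannot vanish for a.e.\ $(x,y)\in\mathbb{R}^{2N}$: otherwise $w$ would be a.e.\ equal to a constant and, since $w=0$ on $\mathbb{R}^{N}\setminus\Omega$, that constant is $0$. Hence $J$ is strictly convex, so it has at most one critical point, which is therefore the unique weak solution. (Alternatively: if $u_{1},u_{2}$ are two weak solutions, subtract the weak formulations and test with $u_{1}-u_{2}$ to obtain $\langle\mathcal{A}u_{1}-\mathcal{A}u_{2},\,u_{1}-u_{2}\rangle=0$ for $\mathcal{A}=(-\Delta)_{p(x,y)}^{s(x,y)}$; the pointwise inequality $(|a|^{p-2}a-|b|^{p-2}b)(a-b)>0$ for $a\neq b$, $p>1$, then forces $u_{1}(x)-u_{1}(y)=u_{2}(x)-u_{2}(y)$ a.e., whence $u_{1}=u_{2}$.)

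The abstract scheme is routine; the two points requiring genuine care are (a) the $C^{1}$-differentiability of the modular term when $p(\cdot,\cdot)$ genuinely varies — one must dominate the difference quotients uniformly in order to pass the limit inside the double integral, which is where (2.3)--(2.4) and the variable-exponent Hölder inequality enter — and (b) the measure-theoretic deduction that $w(x)-w(y)=0$ a.e.\ on $\mathbb{R}^{2N}$ together with $w=0$ outside $\Omega$ implies $w\equiv0$. Granting these, coercivity, weak lower semicontinuity, reflexivity and strict convexity deliver existence and uniqueness.
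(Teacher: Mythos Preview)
Your argument is correct: the direct method on the energy functional $J$ delivers existence, and strict convexity delivers uniqueness. The two points you flag as delicate---$C^{1}$-smoothness of the modular and the passage from $w(x)-w(y)=0$ a.e.\ to $w\equiv 0$---are indeed the only places requiring care, and both are routine once stated.

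The paper takes a different route. Rather than minimize $J$, it works directly with the nonlinear form
\[
A_{p(x,y)}(u,v)=\int_{\mathbb{R}^N}\int_{\mathbb{R}^N}\frac{|u(x)-u(y)|^{p(x,y)-2}(u(x)-u(y))(v(x)-v(y))}{|x-y|^{N+s(x,y)p(x,y)}}\,dx\,dy,
\]
verifies that the induced operator $\mathbb{A}_{p(x,y)}:X_{0}\to X_{0}^{*}$ is bounded, coercive, continuous and strictly monotone, and then invokes Browder's theorem for monotone operators to conclude that $\mathbb{A}_{p(x,y)}$ is onto (hence bijective, by strict monotonicity). The bulk of the paper's proof is a hands-on derivation of the quantitative monotonicity inequality
\[
(|a|^{p(x,y)-2}a-|b|^{p(x,y)-2}b)(a-b)\ge C(p)\,|a-b|^{p^{\pm}},
\]
carried out by cases on $|a-b|$ and on the relative sizes of $|a|$ and $|a-b|$, followed by a further case analysis over the level sets $A_{\alpha},B_{\alpha}$ of $u-v$ to promote this pointwise bound to an operator estimate.

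Your approach is shorter and more conceptual for the bare existence/uniqueness statement. What the paper's approach buys, however, is the explicit lower bound on $A_{p(x,y)}(u,u-v)-A_{p(x,y)}(v,u-v)$ in terms of $\|u-v\|$: this is precisely what is used immediately after the lemma to obtain the H\"older continuity estimates (4.8)--(4.9) for $(-\Delta_{p(x,y)}^{s(x,y)})^{-1}$, which in turn drive the Banach fixed point argument for the second solution. So the paper's proof is doing double duty---establishing the lemma and simultaneously setting up the quantitative machinery needed downstream---whereas your variational proof, while perfectly adequate for the lemma as stated, would leave that machinery to be built separately.
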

\begin{proof}
Define $A_{p(x,y)}(u,v)=\int_{\mathbb{R}^N}\int_{\mathbb{R}^N}\frac{|u(x)-u(y)|^{p(x,y)-2}(u(x)-u(y))(v(x)-v(y))}{|x-y|^{N+s(x,y)p(x,y)}}dx dy$.
%\begin{align*}
%\begin{split}
%|A_{p(x,y)}(u,v)|&\leq \int_{\mathbb{R}^N}\int_{\mathbb{R}^N}\frac{|u(x)-u(y)|^{p(x,y)-1}(v(x)-v(y))}{|x-y|^{N+s(x,y)p(x,y)}}dx dy\\
%&\leq \int_{\mathbb{R}^N}\int_{\mathbb{R}^N}\left(|u(x)-u(y)|^{p(x,y)-1}\left(\frac{1}{|x-y|^{N+s(x,y)p(x,y)}}\right)^{\frac{1}{p^{\prime}(x,y)}}\right)\\
%&\;\;\;\;\;\;\;\;\;\;\;\;\;\;\;\;\;\left(|v(x)-v(y)|\left(\frac{1}{|x-y|^{N+s(x,y)p(x,y)}}\right)^{\frac{1}{p(x,y)}}\right)
%\end{split}
%\end{align*}
%Let $\psi_{1}(x,y)=|u(x)-u(y)|^{p(x,y)-1}\left(\frac{1}{|x-y|^{N+s(x,y)p(x,y)}}\right)^{\frac{1}{p^{\prime}(x,y)}} \in L^{p^{\prime}(x,y)}(\overline{\Omega})$ and\\
%$\psi_{2}(x,y)=|v(x)-v(y)|\left(\frac{1}{|x-y|^{N+s(x,y)p(x,y)}}\right)^{\frac{1}{p(x,y)}}\in L^{p(x,y)}(\overline{\Omega}).$
%Applying Holder's inequality,
%$$|A_{p(x,y)}(u,v)|\leq \left(\frac{1}{p^-}+\frac{1}{p^+}\right)\left\|\psi_{1}\right\|_{L^{p^{\prime}(x,y)}(\overline{\Omega})}\left\|\psi_{2}\right\|_{L^{p(x,y)}(\overline{\Omega})}\leq c.$$
Fix\\ $u\in W_{0}^{s(x,y),p(x,y)}(\overline{\Omega})$. Then using H\"{o}lder's inequality, we get for every $v\in W_{0}^{s(x,y),p(x,y)}(\overline{\Omega})$,
\begin{equation*}
|A_{p(x,y)}(u,v)|\leq
\begin{cases}
\left\|u\right\|^{p^{-}-1}\left\|v\right\|,& \text{if}\;\;{\left\|u\right\|<1}\\
\left\|u\right\|^{p^{+}-1}\left\|v\right\|,& \text{if}\;\;{\left\|u\right\|>1}.
\end{cases}
\end{equation*}
Hence, $A_{p(x,y)}(u,v)$ is well defined and bounded. Also,$A_{p(x,y)}(u,.)$ is linear in second variable and so $A_{p(x,y)}(u,.)\in W^{-s(x,y),p^{\prime}(x,y)}(\Omega)$. We next prove the coercivity of $A_{p(x,y)}$. We have that
$$A_{p(x,y)}(u,u)=\int_{\mathbb{R}^N}\int_{\mathbb{R}^N}\frac{|u(x)-u(y)|^{p(x,y)}}{|x-y|^{N+s(x,y)p(x,y)}}dx dy.$$
If $\left\|u\right\|\leq 1$, then $A_{p(x,y)}(u,u)\geq \left\|u\right\|^{p^+}$ and if $\left\|u\right\| > 1$, then $A_{p(x,y)}(u,u)> \left\|u\right\|^{p^-}$. Hence, $A_{p(x,y)}$ is coercive.  We now show that $A_{p(x,y)}$ is strictly monotone. To prove this we first need to prove an inequality which is as follows.
\begin{equation}
(|a|^{p(x,y)-2}a-|b|^{p(x,y)-2}b)(a-b)\geq 
\begin{cases}
C(p)|a-b|^{p^+} &, \text{if}\;\;{|a-b|\leq 1}\\
C(p)|a-b|^{p^-} &, \text{if}\;\;{|a-b|>1}.
\end{cases}
\end{equation}
\begin{proof}
Let $J(p(x,y))=(|a|^{p(x,y)-2}a-|b|^{p(x,y)-2}b)(a-b)$. Define a function $$g(t)=\left(|ta+(1-t)b|\right)^{p(x,y)-2}\\
\left(ta+(1-t)b\right)$$ then $g(0)=|b|^{p(x,y)-2}b$ and $g(1)=|a|^{p(x,y)-2}a$. So, $J(p(x,y))$ can also be represented as
\begin{align*}
\begin{split}
J(p(x,y))&=(g(1)-g(0))(a-b)=(a-b)\int_{0}^{1}g^{\prime}(t)dt\\
&=(a-b)\int_{0}^{1}\left[|ta+(1-t)b|^{p(x,y)-2}(a-b)+(p(x,y)-2)|ta+(1-t)b|^{p(x,y)-4}\right.\\
&\left.\;\;\;\;({ta+(1-t)b})^2(a-b)\right]dt
\end{split}
\end{align*}
Since, $p(x,y) > 2$ so $J(p(x,y))\geq \int_{0}^{1}|ta+(1-t)b|^{p(x,y)-2}(a-b)^{2}dt$ i.e. $J(p(x,y))\geq (a-b)^{2}\int_{0}^{1}|ta+(1-t)b|^{p(x,y)-2}dt.$ We can further write 
\begin{align*}
\begin{split}
ta+(1-t)b&=ta+(1-t)b+(1-t)a-(1-t)a\\
&=(1-t)(b-a)+a\\
&=a-(1-t)(b-a)
\end{split}
\end{align*}
So,
$|ta+(1-t)b|=|a-(1-t)(b-a)|\geq \left\lvert|a|-(1-t)|(a-b)|\right\rvert$.\\
 Now there arises two cases.\\
\textbf{Case I}: If $|a|\geq |b-a|$ then 
\begin{align*}
\begin{split}
|ta+(1-t)b|&\geq \left\lvert|a|-(1-t)|(a-b)|\right\rvert\\
&\geq |a|-(1-t)|(a-b)|\\
&\geq |b-a|-(1-t)|b-a|\\
&=t|b-a|\\
&=t|a-b|
\end{split}
\end{align*}
Hence, as $t\in(0,1)$,
\begin{align*}
\begin{split}
J(p(x,y))&\geq (a-b)^{2}\int_{0}^{1}(t|a-b|)^{p(x,y)-2}dt\\
&=\int_{0}^{1}t^{p(x,y)-2}|a-b|^{p(x,y)}dt \\
&\geq \int_{0}^{1}t^{p^{+}-2}|a-b|^{p(x,y)}dt
\end{split}
\end{align*}
(i) If $|a-b|\leq 1$ then $J(p(x,y))\geq\frac{|a-b|^{p^{+}}}{p^{+}-1}$.\\
(ii) If $|a-b| > 1$ then $J(p(x,y))\geq\frac{|a-b|^{p^{-}}}{p^{+}-1}$.\\

\noindent\textbf{case II}: If $|a| < |b-a|$ then $|ta+(1-t)b|^{2}\leq 2^{2}|a-b|^{2}$.
\begin{align*}
\begin{split}
J(p(x,y))&\geq (a-b)^{2}\int_{0}^{1}|ta+(1-t)b|^{p(x,y)-2}dt\\
&=(a-b)^{2}\int_{0}^{1}\frac{|ta+(1-t)b|^{p(x,y)}}{|ta+(1-t)b|^2}dt\\
&\geq \frac{1}{2^2}\int_{0}^{1}|ta+(1-t)b|^{p(x,y)}dt\\
&=\frac{1}{2^2}\int_{0}^{1}(|ta+(1-t)b|^2)^{\frac{p(x,y)}{2}}dt\\
&\geq\frac{1}{2^2}\left(\frac{|a-b|^2}{3}\right)^{\frac{p(x,y)}{2}}
\end{split}
\end{align*}
(i) If $|a-b|\leq 1$ then $J(p(x,y))\geq\frac{|a-b|^{p^{+}}}{2^{2}\cdot3^{\frac{p^+}{2}}}$.\\
(ii) If $|a-b| > 1$ then $J(p(x,y))\geq\frac{|a-b|^{p^{-}}}{2^{2}\cdot3^{\frac{p^+}{2}}}$.\\
Hence, 
\begin{equation*}
J(p(x,y))\geq 
\begin{cases}
C(p)|a-b|^{p^+} ,&\text{if}\;\; {|a-b|\leq 1}\\
C(p)|a-b|^{p^-} ,&\text{if}\;\; {|a-b|> 1}
\end{cases}
\end{equation*}
where $C(p)=\text{min}\left\lbrace\frac{1}{p^{+}-1},\frac{1}{2^{2}\cdot3^{\frac{p^+}{2}}}\right\rbrace$ and hence the inequality is proved.
\end{proof}
\noindent Let $F_{p(x,y)}(u,v)=|u(x)-u(y)|^{p(x,y)-2}(u(x)-u(y))(v(x)-v(y))$. Define sets\\
 $A_{\alpha}=\left\lbrace x\in\mathbb{R}^{N}:|(u-v)(x)|>\alpha\right\rbrace$ and $B_{\alpha}=\left\lbrace x\in\mathbb{R}^{N}: 0\leq |(u-v)(x)|\leq\alpha\right\rbrace=\mathbb{R}^{N}\setminus A_{\alpha}$. Now, decompose $A_{\alpha}$ into $A_{\alpha}^{+}$and $A_{\alpha}^{-}$ where 
 $$A_{\alpha}^{+}=\left\lbrace x\in\mathbb{R}^{N}:(u-v)(x)>\alpha\right\rbrace \;\text{and}\; A_{\alpha}^{-}=\left\lbrace x\in\mathbb{R}^{N}:(u-v)(x)<-\alpha\right\rbrace$$ such that $A_{\alpha}=A_{\alpha}^{+}\cup A_{\alpha}^{-}$. Further decompose $B_{\alpha}$ into 
 $$B_{\alpha}^{+}=\left\lbrace x\in\mathbb{R}^{N}:0\leq(u-v)(x)\leq\alpha\right\rbrace \;\text{and}\; B_{\alpha}^{-}=\left\lbrace x\in\mathbb{R}^{N}:-\alpha\leq(u-v)(x)\leq 0\right\rbrace$$ such that $B_{\alpha}=B_{\alpha}^{+}\cup B_{\alpha}^{-}$.
Define $w_{\alpha}=(|u-v|-\alpha)^{+}\text{sgn}(u-v); \alpha\geq 0$.\\
\noindent We will first prove that $w_{\alpha}\in W_{0}^{s(x,y),p(x,y)}(\Omega)$ when $u-v \in W_{0}^{s(x,y),p(x,y)}(\Omega).$\\
(a) Let $u\in W_{0}^{s(x,y),p(x,y)}(\overline{\Omega})$ and hence $u\in L^{q(x)}(\overline{\Omega})$. 
%Clearly, $|u|\in L^{q(x)}(\overline{\Omega}).$
$$\int_{\Omega}\int_{\Omega}\frac{\mid\;|u(x)|-|u(y)|\; \mid^{p(x,y)}}{\lambda^{p(x,y)}|x-y|^{N+s(x,y)p(x,y)}}dx dy\leq\int_{\Omega}\int_{\Omega}\frac{|u(x)-u(y)|^{p(x,y)}}{\lambda^{p(x,y)}|x-y|^{N+s(x,y)p(x,y)}}dx dy < \infty.$$
This implies that $|u|\in W_{0}^{s(x,y),p(x,y)}(\overline{\Omega})$.\\
(b) $0 < u^{+}\leq|u|$ hence $u^{+}\in L^{q(x)}(\overline{\Omega})$.\\
Let $A=\left\lbrace x\in(\overline{\Omega}):u(x)\geq 0\right\rbrace$ and $B=\left\lbrace x\in(\overline{\Omega}):u(x) < 0\right\rbrace$.
\begin{align*}
\begin{split}
\int_{\Omega}\int_{\Omega}\frac{|u^{+}(x)-u^{+}(y)|^{p(x,y)}}{\lambda^{p(x,y)}|x-y|^{N+s(x,y)p(x,y)}}dx dy=&\int_{A}\int_{A}\frac{|u(x)-u(y)|^{p(x,y)}}{\lambda^{p(x,y)}|x-y|^{N+s(x,y)p(x,y)}}dx dy\\
&+\int_{B}\int_{A}\frac{|u(y)|^{p(x,y)}}{\lambda^{p(x,y)}|x-y|^{N+s(x,y)p(x,y)}}dx dy\\
&+\int_{A}\int_{B}\frac{|u(x)|^{p(x,y)}}{\lambda^{p(x,y)}|x-y|^{N+s(x,y)p(x,y)}}dx dy
\end{split}
\end{align*}
Since, $x\in A, y\in B$ we have $|u(y)|\leq|u(x)-u(y)|$ and $|u(x)|\leq|u(x)-u(y)|$. Hence
$$\int_{\Omega}\int_{\Omega}\frac{|u^{+}(x)-u^{+}(y)|^{p(x,y)}}{\lambda^{p(x,y)}|x-y|^{N+s(x,y)p(x,y)}}dx dy\leq\int_{\Omega}\int_{\Omega}\frac{|u(x)-u(y)|^{p(x,y)}}{\lambda^{p(x,y)}|x-y|^{N+s(x,y)p(x,y)}}dx dy < \infty.$$
This implies that $u^{+}\in W_{0}^{s(x,y),p(x,y)}(\overline{\Omega})$.\\
(c) Since, $|u|=u^{+}+u^{-}$ so $u^{-}\in W_{0}^{s(x,y),p(x,y)}(\overline{\Omega})$.\\
Hence, $w_{\alpha}\in W_{0}^{s(x,y),p(x,y)}(\overline{\Omega})$. It can be observed that in $B_{\alpha}, w_{\alpha}=0$. Now, we have $A_{p(x,y)}(u,w_{\alpha})-A_{p(x,y)}(v,w_{\alpha})$ 
\begin{align*}
\begin{split}
=&\int_{\mathbb{R}^N}\int_{\mathbb{R}^N}\frac{|u(x)-u(y)|^{p(x,y)-2}(u(x)-u(y))(w_{\alpha}(x)-w_{\alpha}(y))}{|x-y|^{N+s(x,y)p(x,y)}}dx dy\\
-&\int_{\mathbb{R}^N}\int_{\mathbb{R}^N}\frac{|v(x)-v(y)|^{p(x,y)-2}(v(x)-v(y))(w_{\alpha}(x)-w_{\alpha}(y))}{|x-y|^{N+s(x,y)p(x,y)}}dx dy\\
=&\int_{\mathbb{R}^N}\int_{\mathbb{R}^N}\frac{F_{p(x,y)}(u,w_{\alpha})-F_{p(x,y)}(v,w_{\alpha})}{|x-y|^{N+s(x,y)p(x,y)}}dx dy\\
=&\int_{A_{\alpha}}\int_{A_{\alpha}}\frac{F_{p(x,y)}(u,w_{\alpha})-F_{p(x,y)}(v,w_{\alpha})}{|x-y|^{N+s(x,y)p(x,y)}}dx dy
+\int_{B_{\alpha}}\int_{A_{\alpha}}\frac{F_{p(x,y)}(u,w_{\alpha})-F_{p(x,y)}(v,w_{\alpha})}{|x-y|^{N+s(x,y)p(x,y)}}dx dy\\
+&\int_{A_{\alpha}}\int_{B_{\alpha}}\frac{F_{p(x,y)}(u,w_{\alpha})-F_{p(x,y)}(v,w_{\alpha})}{|x-y|^{N+s(x,y)p(x,y)}}dx dy
\end{split}
\end{align*}
\begin{claim}
We claim the following.
 \end{claim}
\begin{equation*}
F_{p(x,y)}(u,w_{\alpha})-F_{p(x,y)}(v,w_{\alpha})\geq 
\begin{cases}
\frac{|w_{\alpha}(x)-w_{\alpha}(y)|^{p^+}}{2^{2}\cdot3^{\frac{p^{+}}{2}}},& \text{if}\;\;{|w_{\alpha}(x)-w_{\alpha}(y)|\leq 1}\\
\frac{|w_{\alpha}(x)-w_{\alpha}(y)|^{p^-}}{2^{2}\cdot3^{\frac{p^{+}}{2}}},&\text{if}\;\;{|w_{\alpha}(x)-w_{\alpha}(y)|> 1}
\end{cases}
\end{equation*}
in $\mathbb{R}^N\times\mathbb{R}^N$.
\begin{proof}
Using $A_{\alpha}=A_{\alpha}^{+}\cup A_{\alpha}^{-}$, $B_{\alpha}=B_{\alpha}^{+}\cup B_{\alpha}^{-}$ and the fact that on $A_{\alpha}^{+}\times A_{\alpha}^{+}$ and $A_{\alpha}^{-}\times A_{\alpha}^{-}$, $w_{\alpha}=(u-v)-\alpha$, we get
\begin{align*}
\begin{split}
F_{p(x,y)}(u,w_{\alpha})-F_{p(x,y)}(v,w_{\alpha})&=|u(x)-u(y)|^{p(x,y)-2}(u(x)-u(y))(w_{\alpha}(x)-w_{\alpha}(y))\\
&-|v(x)-v(y)|^{p(x,y)-2}(v(x)-v(y))(w_{\alpha}(x)-w_{\alpha}(y))\\
&=|u(x)-u(y)|^{p(x,y)-2}(u(x)-u(y))\left[ ((u-v)(x)-\alpha)\right.\\
&\left.-((u-v)(y)-\alpha)\right]-|v(x)-v(y)|^{p(x,y)-2}(v(x)-v(y))\\
&\;\;\;\;[((u-v)(x)-\alpha)-((u-v)(y)-\alpha)]\\
&=|u(x)-u(y)|^{p(x,y)-2}(u(x)-u(y))[((u-v)(x))-((u-v)(y))]\\
&-|v(x)-v(y)|^{p(x,y)-2}(v(x)-v(y))[((u-v)(x))-((u-v)(y)]
\end{split}
\end{align*}
%Now, applying (4.7) directly will prove the claim.
%\begin{equation*}
%F_{p(x,y)}(u,w_{\alpha})-F_{p(x,y)}(v,w_{\alpha})\geq 
%\begin{cases}
%\frac{|w_{\alpha}(x)-w_{\alpha}(y)|^{p^+}}{2^{2}\cdot3^{\frac{p^{+}}{2}}},& \text{if}\;\;{|w_{\alpha}(x)-w_{\alpha}(y)|\leq 1}\\
%\frac{|w_{\alpha}(x)-w_{\alpha}(y)|^{p^-}}{2^{2}\cdot3^{\frac{p^{+}}{2}}},&\text{if}\;\;{|w_{\alpha}(x)-w_{\alpha}(y)|> 1}
%\end{cases}
%\end{equation*}
Now on $A_{\alpha}^{+}\times A_{\alpha}^{-}$ and $A_{\alpha}^{-}\times A_{\alpha}^{+},$ we have 
\begin{align*}
\begin{split}
F_{p(x,y)}(u,w_{\alpha})-F_{p(x,y)}(v,w_{\alpha})
&=[|u(x)-u(y)|^{p(x,y)-2}(u(x)-u(y))-|v(x)-v(y)|^{p(x,y)-2}\\
&\;\;\;\;\;(v(x)-v(y))][(u-v)(x)-(u-v)(y)-2\alpha]
\end{split}
\end{align*}
\begin{align*}
\begin{split}
\geq
\begin{cases}
\frac{|(u-v)(x)-(u-v)(y)|^{p^+}-1}{2^{2}\cdot3^{\frac{p^{+}}{2}}}[(u-v)(x)-(u-v)(y)-2\alpha], &\;\;\text{if}\;{|(u-v)(x)-(u-v)(y)|\leq 1}\\
\frac{|(u-v)(x)-(u-v)(y)|^{p^-}-1}{2^{2}\cdot3^{\frac{p^{+}}{2}}}[(u-v)(x)-(u-v)(y)-2\alpha], &\;\;\text{if}\;{|(u-v)(x)-(u-v)(y)| > 1}
\end{cases}
\end{split}
\end{align*}
\begin{align*}
\geq
\begin{cases}
\frac{|(u-v)(x)-(u-v)(y)-2\alpha|^{p^+}-1}{2^{2}\cdot3^{\frac{p^{+}}{2}}}[(u-v)(x)-(u-v)(y)-2\alpha], &\text{if}\;{|(u-v)(x)-(u-v)(y)|\leq 1}\\
\frac{|(u-v)(x)-(u-v)(y)-2\alpha|^{p^-}-1}{2^{2}\cdot3^{\frac{p^{+}}{2}}}[(u-v)(x)-(u-v)(y)-2\alpha], &\text{if}\;{|(u-v)(x)-(u-v)(y)| > 1}
\end{cases}
\end{align*}
\begin{align*}
\MoveEqLeft\hspace{-55mm}=
\begin{cases}
\frac{|(u-v)(x)-(u-v)(y)-2\alpha|^{p^+}}{2^{2}\cdot3^{\frac{p^{+}}{2}}}, &\text{if}\;{|(u-v)(x)-(u-v)(y)|\leq 1}\\
\frac{|(u-v)(x)-(u-v)(y)-2\alpha|^{p^+}}{2^{2}\cdot3^{\frac{p^{-}}{2}}}, &\text{if}\;{|(u-v)(x)-(u-v)(y)| > 1}
\end{cases}
\end{align*}
\begin{align*}
\MoveEqLeft\hspace{-75mm}=
\begin{cases}
\frac{|w_{\alpha}(x)-w_{\alpha}(y)|^{p^+}}{2^{2}\cdot3^{\frac{p^{+}}{2}}}, &\text{if}\;{|w_{\alpha}(x)-w_{\alpha}(y)|\leq 1}\\
\frac{|w_{\alpha}(x)-w_{\alpha}(y)|^{p^-}}{2^{2}\cdot3^{\frac{p^{+}}{2}}}, &\text{if}\;{|w_{\alpha}(x)-w_{\alpha}(y)|>1}
\end{cases}
\end{align*}
Similarly, for $A_{\alpha}^{+}\times B_{\alpha}^{+}, A_{\alpha}^{+}\times B_{\alpha}^{-}, A_{\alpha}^{-}\times B_{\alpha}^{+}$ and $A_{\alpha}^{-}\times B_{\alpha}^{-}$, we get similar inequality. Thus, the claim is proved.
\end{proof}
\noindent Hence, 
\begin{equation*}
A_{p(x,y)}(u,w_{\alpha})-A_{p(x,y)}(v,w_{\alpha})\geq
\begin{cases}
\int_{\mathbb{R}^N}\int_{\mathbb{R}^N}\frac{|w_{\alpha}(x)-w_{\alpha}(y)|^{p^+}}{|x-y|^{N+s(x,y)p(x,y)}}dx dy,&\text{if}\;\;{|w_{\alpha}(x)-w_{\alpha}(y)|\leq 1}\\
\int_{\mathbb{R}^N}\int_{\mathbb{R}^N}\frac{|w_{\alpha}(x)-w_{\alpha}(y)|^{p^-}}{|x-y|^{N+s(x,y)p(x,y)}}dx dy, &\text{if}\;\;{|w_{\alpha}(x)-w_{\alpha}(y)|> 1}
\end{cases}
\end{equation*}
\noindent(i) If $|w_{\alpha}(x)-w_{\alpha}(y)| > 1$ and $\left\|w_{\alpha}\right\|\leq 1$ then
\begin{align*}
\begin{split}
A_{p(x,y)}(u,w_{\alpha})-A_{p(x,y)}(v,w_{\alpha})&\geq \frac{1}{2^{2}\cdot3^{\frac{p^{+}}{2}}}\int_{\mathbb{R}^N}\int_{\mathbb{R}^N}\frac{|w_{\alpha}(x)-w_{\alpha}(y)|^{p^-}}{|x-y|^{N+s(x,y)p(x,y)}}dx dy\\
&\geq \frac{1}{2^{2}\cdot3^{\frac{p^{+}}{2}}}\int_{\mathbb{R}^N}\int_{\mathbb{R}^N}\frac{|w_{\alpha}(x)-w_{\alpha}(y)|^{\alpha^+}}{|x-y|^{N+s(x,y)p(x,y)}}dx dy\\
&\geq \frac{1}{2^{2}\cdot3^{\frac{p^{+}}{2}}}\int_{\mathbb{R}^N}\int_{\mathbb{R}^N}\frac{|w_{\alpha}(x)-w_{\alpha}(y)|^{\alpha(x)}}{|x-y|^{N+s(x,y)p(x,y)}}dx dy\\
&\geq\frac{1}{2^{2}\cdot3^{\frac{p^{+}}{2}}}\left\|w_{\alpha}\right\|^{r^+}
\end{split}
\end{align*}
(ii) If $|w_{\alpha}(x)-w_{\alpha}(y)| > 1$ and $\left\|w_{\alpha}\right\| > 1$ then
\begin{align*}
\begin{split}
A_{p(x,y)}(u,w_{\alpha})-A_{p(x,y)}(v,w_{\alpha})&\geq \frac{1}{2^{2}\cdot3^{\frac{p^{+}}{2}}}\int_{\mathbb{R}^N}\int_{\mathbb{R}^N}\frac{|w_{\alpha}(x)-w_{\alpha}(y)|^{p^-}}{|x-y|^{N+s(x,y)p(x,y)}}dx dy\\
&\geq \frac{1}{2^{2}\cdot3^{\frac{p^{+}}{2}}}\int_{\mathbb{R}^N}\int_{\mathbb{R}^N}\frac{|w_{\alpha}(x)-w_{\alpha}(y)|^{\alpha^+}}{|x-y|^{N+s(x,y)p(x,y)}}dx dy\\
&\geq \frac{1}{2^{2}\cdot3^{\frac{p^{+}}{2}}}\int_{\mathbb{R}^N}\int_{\mathbb{R}^N}\frac{|w_{\alpha}(x)-w_{\alpha}(y)|^{\alpha(x)}}{|x-y|^{N+s(x,y)p(x,y)}}dx dy\\
&\geq\frac{1}{2^{2}\cdot3^{\frac{p^{+}}{2}}}\left\|w_{\alpha}\right\|^{\alpha^-}
\end{split}
\end{align*}
Hence, in all the cases involving when $|w_{\alpha}(x)-w_{\alpha}(y)| \leq 1$ with $\left\|w_{\alpha}\right\| \leq 1$ and $|w_{\alpha}(x)-w_{\alpha}(y)| \leq 1$ with $\left\|w_{\alpha}\right\| > 1$, we obtained $A_{p(x,y)}(u,w_{\alpha})-A_{p(x,y)}(v,w_{\alpha}) > 0$ for $u\neq v$. For $\alpha=0,$ we get
 $A_{p(x,y)}(u,u-v)-A_{p(x,y)}(v,u-v) > 0$ for $u\neq v$. Hence, $A_{p(x,y)}(u,v)$ is strictly monotone operator. Final step is to prove that $A_{p(x,y)}$ is a homeomorphism (refer appendix for proof). We have shown that for every $u\in W_{0}^{s(x,y),p(x,y)}(\overline{\Omega}),$ there exists $\mathbb{A}_{p(x,y)}(u)$ in $W^{-s(x,y),p^{\prime}(x,y)}(\Omega)$ such that $A_{p(x,y)}(u,v)=\langle\mathbb{A}_{p(x.y)}(u),v\rangle$ for all $v\in W_{0}^{s(x,y),p(x,y)}(\overline{\Omega})$. This defines an operator $\mathbb{A}_{p(x,y)}:W_{0}^{s(x,y),p(x,y)}(\overline{\Omega})\rightarrow W^{-s(x,y),p^{\prime}(x,y)}(\Omega)$ which is strictly monotone, continuous, coercive and bounded. Hence, by Browder's theorem \cite{Milota}, we get  $\text{Range}(\mathbb{A}_{p(x,y)})=W^{-s(x,y),p^{\prime}(x,y)}(\Omega)$. Thus, for every $f\in W^{-s(x,y),p^{\prime}(x,y)}(\Omega)$, there exists a unique solution $u\in W_{0}^{s(x,y),p(x,y)}(\overline{\Omega})$ of problem (4.6).
\end{proof}
\noindent We will now find an estimate for the operator $\left(-\Delta_{p(x,y)}^{s(x,y)}\right)^{-1}$ which is a map between\\ $W^{-s(x,y),p^{\prime}(x,y)}(\Omega)$ and $W_{0}^{s(x,y),p(x,y)}(\overline{\Omega})$. Let $u=\left(-\Delta_{p(x,y)}^{s(x,y)}\right)^{-1}(f)$ and $v=\left(-\Delta_{p(x,y)}^{s(x,y)}\right)^{-1}(g)$, where $f,g\in W^{-s(x,y),p^{\prime}(x,y)}(\Omega)$. Then for each $\phi\in W_{0}^{s(x,y),p(x,y)}(\overline{\Omega}),$ we have $$A_{p(x,y)}(u,\phi)-A_{p(x,y)}(v,\phi)=\langle f-g,\phi\rangle.$$ In particular, taking $\phi=(u-v)$ and $\alpha=0$, we get $A_{p(x,y)}(u,u-v)-A_{p(x,y)}(v,u-v)=\langle f-g,\phi\rangle$. For $\left\|w_{\alpha}\right\|\leq 1$ i.e. $\left\|u-v\right\|\leq 1$, we have
$$\frac{1}{2^{2}\cdot3^{\frac{p^{+}}{2}}}\left\|u-v\right\|^{r^{+}}\leq \langle f-g,u-v\rangle\leq \left\|f-g\right\|_{W^{-s(x,y),p^{\prime}(x,y)}(\Omega)}\left\|u-v\right\|$$ which implies that
 $\left\|u-v\right\|^{r^{+}-1}\leq \left(2^{2}\cdot3^{\frac{p^{+}}{2}}\right)\left\|f-g\right\|_{W^{-s(x,y),p^{\prime}(x,y)}(\Omega)}$ and hence
 \begin{equation}
\left\|\left(-\Delta_{p(x,y)}^{s(x,y)}\right)^{-1}(f)-\left(-\Delta_{p(x,y)}^{s(x,y)}\right)^{-1}(g)\right\|\leq \left[\left( 2^{2}\cdot3^{\frac{p^{+}}{2}}\right)\left\|f-g\right\|_{W^{-s(x,y),p^{\prime}(x,y)}(\Omega)}\right] ^{\frac{1}{r^{+}-1}}
\end{equation}
Also, for  $\left\|w_{\alpha}\right\| > 1$ i.e. $\left\|u-v\right\| > 1$, we have\\
$\frac{1}{2^{2}\cdot3^{\frac{p^{+}}{2}}}\left\|u-v\right\|^{\alpha^{-}}\leq \langle f-g,u-v\rangle\leq \left\|f-g\right\|_{W^{-s(x,y),p^{\prime}(x,y)}(\Omega)}\left\|u-v\right\|$ which implies that
$\left\|u-v\right\|^{\alpha^{-}-1}\leq \left(2^{2}\cdot3^{\frac{p^{+}}{2}}\right)\left\|f-g\right\|_{W^{-s(x,y),p^{\prime}(x,y)}(\Omega)}$ and hence
\begin{equation}
\left\|\left(-\Delta_{p(x,y)}^{s(x,y)}\right)^{-1}(f)-\left(-\Delta_{p(x,y)}^{s(x,y)}\right)^{-1}(g)\right\|\leq \left[\left( 2^{2}\cdot3^{\frac{p^{+}}{2}}\right)\left\|f-g\right\|_{W^{-s(x,y),p^{\prime}(x,y)}(\Omega)}\right] ^{\frac{1}{\alpha^{-}-1}}
\end{equation}
Define a map $\Phi_{s(x,y),p(x,y)}:W_{0}^{s(x,y),p(x,y)}(\overline{\Omega})\rightarrow W_{0}^{s(x,y),p(x,y)}(\overline{\Omega})$ by $$\Phi_{s(x,y),p(x,y)}(u)=\left(-\Delta_{p(x,y)}^{s(x,y)}\right)^{-1}\left(\beta|u|^{\alpha(x)-2}u+\lambda f(x,u)\right)\;\forall\;u\in W_{0}^{s(x,y),p(x,y)}(\overline{\Omega}).$$ It can be seen from Lemma 4.5 that this map is well-defined and continuous. 
%We will now prove that $\Phi_{s(x,y),p(x,y)}$ has a fixed point in a neighbourhood of $u_{0}=\tau_{2}u$. Define a set $M= \left\lbrace v\in W_{0}^{s(x,y),p(x,y)}(\overline{\Omega}):\left\|v-u_{0}\right\|\leq\tau\right\rbrace$, where $\tau < 1$ is a sufficiently small positive number. $M$ being a closed subspace in the Banach space $W_{0}^{s(x,y),p(x,y)}(\overline{\Omega})$ is also complete.Let $u_{0}=\tau_{2}u$. 
We aim to show that $\Phi_{s(x,y),p(x,y)}$ is a contraction map. Let $u,v\in W_{0}^{s(x,y),p(x,y)}(\overline{\Omega})$. Define $v_{1}=\beta|u|^{\alpha(x)-2}u+\lambda f(x,u)$ and $v_{2}=\beta|v|^{\alpha(x)-2}v+\lambda f(x,v)$. First we will prove that $v_{1}$ and $v_{2}$ are in $W^{-s(x,y),p^{\prime}(x,y)}(\Omega)$. Using Theorem 2.4, we get for $\phi\in W_{0}^{s(x,y),p(x,y)}(\overline{\Omega})$,
\begin{align*}
\begin{split}
\left\lvert\int_{\Omega}|u|^{\alpha(x)-2}u\phi dx\right\rvert&\leq\int_{\Omega}|u|^{\alpha(x)-1}\phi|dx\leq |u|^{\alpha(x)-1}_{L^{\frac{\alpha(x)}{\alpha(x)-1}}(\Omega)}|\phi|_{L^{\alpha(x)}(\Omega)}\\
&\leq\left(\left\|u\right\|^{\alpha^{-}-1}_{L^{\alpha(x)}(\Omega)}+\left\|u\right\|^{\alpha^{+}-1}_{L^{\alpha(x)}(\Omega)}\right)\left\|\phi\right\|_{W_{0}^{s(x,y),p(x,y)}(\overline{\Omega})}\\
&\leq c_{1}\left\|\phi\right\|_{W_{0}^{s(x,y),p(x,y)}(\overline{\Omega})}
\end{split}
\end{align*}
Hence, $|u|^{\alpha(x)-2}u\in W^{-s(x,y),p^{\prime}(x,y)}(\Omega)$. For the second term of $v_{1}$, using assumption $(f_{4})$ it can be seen that $f(x,u)\in L^{p^'(x,y)}(\Omega)$ and hence $f(x,u)\in W^{-s(x,y),p^{\prime}(x,y)}(\Omega)$. Hence, $v_{1}\in W^{-s(x,y),p^{\prime}(x,y)}(\Omega)$. Repeating the argument for $v_{2}$, we get $v_{2}\in W^{-s(x,y),p^{\prime}(x,y)}(\Omega)$. Let $\left\|u-v\right\| < 3\tau < 1$, where $\tau < 1$ is a sufficiently small positive number. Now using (4.8) we obtained
\begin{equation}
\left\|\Phi_{s(x,y),p(x,y)}(u)-\Phi_{s(x,y),p(x,y)}(v)\right\|\leq \left[\left( 2^{2}\cdot3^{\frac{p^{+}}{2}}\right)\left\|v_{1}-v_{2}\right\|_{W^{-s(x,y),p^{\prime}(x,y)}(\Omega)}\right] ^{\frac{1}{r^{+}-1}}
\end{equation}
Choose $\phi\in W_{0}^{s(x,y),p(x,y)}(\overline{\Omega})$ such that $\left\|\phi\right\|\leq 1$. Using (2.3), (2.4) and $(f_{3})$ we have,
\begin{align}
\begin{split}
|\langle v_{1}-v_{2},\phi\rangle|&\leq \int_{\Omega}\left\lvert\beta|u|^{\alpha(x)-2}u\phi+\lambda f(x,u)\phi-\beta|v|^{\alpha(x)-2}v\phi-\lambda f(x,v)\phi\right\rvert dx\\
&\leq \beta\int_{\Omega}| |u|^{\alpha(x)-2}u-|v|^{\alpha(x)-2}v | |\phi|dx+\lambda\int_{\Omega}|f(x,u)-f(x,v)| |\phi|dx\\
&\leq \beta\int_{\Omega}(\alpha(x)-1)|u-v||\phi|\left(|u|^{\alpha(x)+1}+|v|^{\alpha(x)+1}\right)dx+\lambda\delta\int_{\Omega}|u-v||\phi||u|^{r^{+}-2}dx\\
&+\lambda\delta\int_{\Omega}|u-v||\phi||v|^{r^{+}-2}dx\\
&\leq \beta(\alpha^{+}-1)\int_{\Omega}|u-v||u|^{\alpha(x)-1}|\phi|dx+\beta(\alpha^{+}-1)\int_{\Omega}|u-v||v|^{\alpha(x)-1}|\phi|dx\\
&+2^{r^{+}-2}\lambda\delta\int_{\Omega}|\phi||u-v|\left(|u-v|^{r^{+}-2}+|v|^{r^{+}-2}\right)dx\\
&+2^{r^{+}-2}\lambda\delta\int_{\Omega}|\phi||u-v|\left(|u-v|^{r^{+}-2}+|u|^{r^{+}-2}\right)dx
\end{split}
\end{align}
Applying  generalized H\"{o}lder's inequality (refer \cite{Bahrouni}), embedding result and using the fact that $\|\phi\| \leq 1$ for each term, we get
\begin{align}
\begin{split}
|\langle v_{1}-v_{2},\phi\rangle|&\leq \beta(\alpha^{+}-1)C_{0}\left\|u-v\right\|+\beta(\alpha^{+}-1)C_{1}\left\|u-v\right\|+2^{r^{+}-1}\lambda\delta C_{2}\left\|u-v\right\|^{r^{+}-1}\\
&+2^{r^{+}-2}\lambda\delta C_{3}\left\|u-v\right\|+2^{r^{+}-2}\lambda\delta C_{4}\left\|u-v\right\|\\
&\leq \tilde{C}\left\lbrace 2\beta(\alpha^{+}-1)\left\|u-v\right\|+2^{r^{+}-1}\lambda\delta\left\|u-v\right\|^{r^{+}-1}+2^{r^{+}-1}\lambda\delta\left\|u-v\right\|\right\rbrace 
\end{split}
\end{align}
where $C_{0}, C_{1}, C_{2}, C_{3}, C_{4}$ are constants from embedding and H\"{o}lder's inequality and $\tilde{C}=\\ \text{max}\left\lbrace C_{0}, C_{1}, C_{2}, C_{3}, C_{4}\right\rbrace  > 0$.
For $\phi=\frac{v_{1}-v_{2}}{\left\|v_{1}-v_{2}\right\|}$, (4.12) becomes 
\begin{align}
\begin{split}
\left\|v_{1}-v_{2}\right\|&\leq \tilde{C}\left\|u-v\right\|^{r^{+}-1}\left\lbrace 2\beta(\alpha^{+}-1)\left\|u-v\right\|^{2-r^{+}}+2^{r^{+}-1}\lambda\delta+2^{r^{+}-1}\lambda\delta\left\|u-v\right\|^{2-r^{+}}\right\rbrace \\
&< \tilde{C}\left\|u-v\right\|^{r^{+}-1}\left\lbrace 2\beta(\alpha^{+}-1)(3\tau)^{2-r^{+}}+2^{r^{+}-1}\lambda\delta+2^{r^{+}-1}\lambda\delta(3\tau)^{2-r^{+}}\right\rbrace\\
&= \tilde{C}K(\beta,\lambda,\delta,\tau)\left\|u-v\right\|^{r^{+}-1}
\end{split}
\end{align}
where, $K(\beta,\lambda,\delta,\tau)=\left\lbrace 2\beta(\alpha^{+}-1)(3\tau)^{2-r^{+}}+2^{r^{+}-1}\lambda\delta+2^{r^{+}-1}\lambda\delta(3\tau)^{2-r^{+}}\right\rbrace.$
\noindent Substituting (4.13) in (4.10), we get
\begin{align}
\begin{split}
\left\|\Phi_{s(x,y),p(x,y)}(u)-\Phi_{s(x,y),p(x,y)}(v)\right\|&<  \left[\left( 2^{2}\cdot3^{\frac{p^{+}}{2}}\right)K(\beta,\lambda,\delta,\tau)\tilde{C}\left\|u-v\right\|^{r^{+}-1}\right] ^{\frac{1}{r^{+}-1}}\\
&< \left[\left( 2^{2}\cdot3^{\frac{p^{+}}{2}}\right)K(\beta,\lambda,\delta,\tau)\tilde{C}\right]^{\frac{1}{r^{+}-1}}\left\|u-v\right\|
\end{split}
\end{align}
\noindent Choose $\lambda$ and $\beta$ sufficiently small say $\lambda\in(0,\lambda_{2})$ and $\beta\in (0,\beta_{2})$ such that the coefficient of (4.14) becomes less than 1. Thus $\Phi_{p(x,y),s(x,y)}$ is a contraction map. Therefore there exists a unique fixed point say $u_{2}$. Let $u_{0}=\tau_{2}u$. Define a set $M= \left\lbrace v\in W_{0}^{s(x,y),p(x,y)}(\overline{\Omega}):\left\|v-u_{0}\right\|\leq\tau\right\rbrace$. $M$ being a closed subspace in the Banach space $W_{0}^{s(x,y),p(x,y)}(\overline{\Omega})$ is also complete. By the argument above, since $\Phi_{p(x,y),s(x,y)}$ is a contraction map, there exists a unique fixed point $u_{2}\in M$. Since, $u_{0}$ is in $\mathcal{N}^{-}$ and $I_{\lambda,\beta}(u_{0}) > 0$, so does $u_{2}$. Hence, $u_{2}$ is a nontrivial weak solution of the problem in $\mathcal{N}^{-}$ and Lemma 4.4 is proved. Thus we have proved the existence of two distinct nontrivial solutions of the problem (1.1) in Nehari manifold for $\lambda\in(0,\text{min}(\lambda_{1},\lambda_{2}))$ and $\beta\in(0,\text{min}(\beta_{1},\beta_{2}))$.
\section{Uniform estimate}
In $\mathcal{N}^{-}(\text{or}\;\mathcal{N}^{+})$ we have either $|u(x)| < 1$ or $|u(x)| > 1$.
%{ When $|u| < 1$ i.e. if $u$ is in $L^{\infty}(\Omega)$ then this is a weak solution of the problem (1.1).}
We will now show that the weak solution belonging to either $\mathcal{N}^{+}$ or $\mathcal{N}^{-}$ is bounded. In both $\mathcal{N}^{+}$ and $\mathcal{N}^{-}$, we have $\left\|u\right\| < 1$. Let $u$ be the weak solution of the problem (1.1). If $|u(x)| < 1$ then clearly $u$ is in $L^{\infty}(\Omega)$. The case when $|u(x)| > 1$ has been proved in appendix.
\section{Appendix}
A map $L:X\rightarrow X^{*}$, where $X$ is a Banach space is said to be of type $(S_{+})$ if $u_{n}\rightharpoonup u$ in $X$ and $\underset{n\rightarrow\infty}{\text{lim sup}}\langle L(u_{n})-L(u),u_{n}-u\rangle \leq 0,$ then $u_{n}\rightarrow u$ in $X$.
\begin{lemma}
The operator $A_{p(x,y)}$ is of type $(S_{+})$ and a homeomorphism.
\end{lemma}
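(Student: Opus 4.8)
\emph{The $(S_{+})$ property.} The plan is to derive this (and the homeomorphism assertion below) from the sharp form of the monotonicity estimate behind \eqref{} --- concretely, the computation in the proof of inequality (4.7) in fact establishes, for all $a,b\in\mathbb{R}$ whenever $p(x,y)\ge 2$ (which holds here since $p^{-}>\alpha^{-}>2$),
\[
\bigl(|a|^{p(x,y)-2}a-|b|^{p(x,y)-2}b\bigr)(a-b)\ \ge\ C(p)\,|a-b|^{p(x,y)},\qquad C(p)=\min\Bigl\{\tfrac{1}{p^{+}-1},\ \tfrac{1}{2^{2}\cdot 3^{p^{+}/2}}\Bigr\},
\]
this being precisely the bound reached in Case I and Case II just before the (lossy) reduction to the two-case statement (4.7). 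Now suppose $u_{n}\rightharpoonup u$ in $X_{0}$ with $\limsup_{n}\langle\mathbb{A}_{p(x,y)}(u_{n})-\mathbb{A}_{p(x,y)}(u),u_{n}-u\rangle\le 0$. Then $(u_{n})$ is bounded and $u_{n}-u\rightharpoonup 0$, so $\langle\mathbb{A}_{p(x,y)}(u),u_{n}-u\rangle\to 0$; since $\mathbb{A}_{p(x,y)}$ is monotone (indeed strictly monotone, by Lemma 4.5) the pairing $\langle\mathbb{A}_{p(x,y)}(u_{n})-\mathbb{A}_{p(x,y)}(u),u_{n}-u\rangle$ is nonnegative, hence tends to $0$. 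Writing $w_{n}=u_{n}-u$ and applying the sharp inequality pointwise with $a=u_{n}(x)-u_{n}(y)$, $b=u(x)-u(y)$ (so $a-b=w_{n}(x)-w_{n}(y)$), dividing by $|x-y|^{N+s(x,y)p(x,y)}$ and integrating over $\mathbb{R}^{N}\times\mathbb{R}^{N}$ gives $\langle\mathbb{A}_{p(x,y)}(u_{n})-\mathbb{A}_{p(x,y)}(u),w_{n}\rangle\ge C(p)\,\rho_{X_{0}}(w_{n})$. Therefore $\rho_{X_{0}}(u_{n}-u)\to 0$, and the last item of Lemma 2.2 forces $\|u_{n}-u\|\to 0$, i.e.\ $u_{n}\to u$ in $X_{0}$.

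\emph{Homeomorphism.} Injectivity of $\mathbb{A}_{p(x,y)}$ follows from strict monotonicity, its surjectivity onto $W^{-s(x,y),p^{\prime}(x,y)}(\Omega)$ was already obtained in Lemma 4.5 via Browder's theorem, and $\mathbb{A}_{p(x,y)}$ is continuous; it remains to check that $\mathbb{A}_{p(x,y)}^{-1}$ is continuous. I would take $f_{n}\to f$ in $W^{-s(x,y),p^{\prime}(x,y)}(\Omega)$ and set $u_{n}=\mathbb{A}_{p(x,y)}^{-1}(f_{n})$, $u=\mathbb{A}_{p(x,y)}^{-1}(f)$. Coercivity together with $\langle\mathbb{A}_{p(x,y)}(u_{n}),u_{n}\rangle=\langle f_{n},u_{n}\rangle\le\|f_{n}\|_{W^{-s(x,y),p^{\prime}(x,y)}(\Omega)}\|u_{n}\|$ shows $(u_{n})$ is bounded, so along a subsequence $u_{n}\rightharpoonup\tilde u$. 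From the identity
\[
\langle\mathbb{A}_{p(x,y)}(u_{n})-\mathbb{A}_{p(x,y)}(\tilde u),u_{n}-\tilde u\rangle=\langle f_{n}-f,u_{n}-\tilde u\rangle+\langle f,u_{n}-\tilde u\rangle-\langle\mathbb{A}_{p(x,y)}(\tilde u),u_{n}-\tilde u\rangle,
\]
each term on the right tends to $0$ (the first since $\|f_{n}-f\|\to 0$ and $(u_{n}-\tilde u)$ is bounded, the other two since $u_{n}-\tilde u\rightharpoonup 0$), so the $(S_{+})$ property just proved gives $u_{n}\to\tilde u$. Continuity of $\mathbb{A}_{p(x,y)}$ then yields $f_{n}=\mathbb{A}_{p(x,y)}(u_{n})\to\mathbb{A}_{p(x,y)}(\tilde u)$, whence $\mathbb{A}_{p(x,y)}(\tilde u)=f$ and, by injectivity, $\tilde u=u$; as the limit does not depend on the subsequence, $u_{n}\to u$, so $\mathbb{A}_{p(x,y)}^{-1}$ is continuous and $\mathbb{A}_{p(x,y)}$ is a homeomorphism. (Alternatively, the quantitative bounds (4.8)--(4.9) already give H\"older continuity of $\mathbb{A}_{p(x,y)}^{-1}$ outright.)

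\emph{Where the difficulty lies.} The one subtle point is that the two-case inequality (4.7) as stated is too weak to run the $(S_{+})$ argument: its right-hand side carries a fixed power $p^{+}$ (resp.\ $p^{-}$) of $|w_{n}(x)-w_{n}(y)|$, and in the regime $|w_{n}(x)-w_{n}(y)|\le 1$ this does not dominate $|w_{n}(x)-w_{n}(y)|^{p(x,y)}$, so one cannot read off $\rho_{X_{0}}(w_{n})\to 0$ from it. The remedy --- and this is really the heart of the proof --- is to retain the variable-exponent power in the estimate, i.e.\ the bound $\ge C(p)|a-b|^{p(x,y)}$, which the proof of (4.7) in fact produces along the way and which uses only $p(x,y)\ge 2$, part of the standing hypotheses. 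Once that is in place, everything else above is a routine reassembly of results already established in the paper.
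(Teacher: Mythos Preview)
Your proof is correct, but the $(S_{+})$ argument follows a genuinely different route from the paper's. The paper does not use the pointwise strong-monotonicity bound $\langle\mathbb{A}_{p(x,y)}(u_n)-\mathbb{A}_{p(x,y)}(u),u_n-u\rangle\ge C(p)\,\rho_{X_0}(u_n-u)$ at all; instead it first invokes the compact embedding to get $u_n\to u$ a.e., then combines Fatou's lemma with a Young-inequality estimate on $\langle A_{p(x,y)}(u_n),u_n-u\rangle$ to obtain $\rho_{X_0}(u_n)\to\rho_{X_0}(u)$, and finally appeals to the Brezis--Lieb lemma to upgrade a.e.\ convergence plus convergence of modulars to strong convergence. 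Your approach is shorter and more elementary: it bypasses Fatou, Young and Brezis--Lieb entirely, relying only on the variable-exponent monotonicity inequality (which, as you correctly observe, is actually proved en route to (4.7) before the lossy reduction to fixed powers) together with the modular--norm relation of Lemma~2.3. The paper's approach, by contrast, is the more ``classical'' template and would still work in settings where a sharp lower bound of the form $C|a-b|^{p(x,y)}$ is unavailable. For the homeomorphism part the two arguments coincide: both use coercivity to get boundedness, pass to a weakly convergent subsequence, and feed it back into the $(S_{+})$ property; your write-up is simply more explicit about identifying the limit via injectivity and the subsequence argument.
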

\begin{proof}
Let $u_{n}\in W_{0}^{s(x,y),p(x,y)}(\overline{\Omega})$ be a sequence such that $u_{n}\rightharpoonup u$ in $W_{0}^{s(x,y),p(x,y)}(\overline{\Omega})$ and $\underset{n\rightarrow\infty}{\text{lim sup}}\langle A_{p(x,y)}(u_{n})-A_{p(x,y)}(u),u_{n}-u\rangle \leq 0$. Then by Theorem 2.3, $u_{n}(x)\rightarrow u(x)$ a.e. in $\Omega$. We have also proved that $A_{p(x,y)}$ is a continuous, bounded and strictly monotone operator, so $\underset{n\rightarrow\infty}{\text{lim}}\langle A_{p(x,y)}(u_{n})-A_{p(x,y)}(u),u_{n}-u\rangle=0$. Now, applying Fatou's lemma
\begin{equation}
\underset{n\rightarrow\infty}{\text{lim inf}}\int_{\Omega}\int_{\Omega}\frac{|u_{n}(x)-u_{n}(y)|^{p(x,y)}}{|x-y|^{N+s(x,y)p(x,y)}} dx dy \geq \int_{\Omega}\int_{\Omega}\frac{|u(x)-u(y)|^{p(x,y)}}{|x-y|^{N+s(x,y)p(x,y)}} dx dy.
\end{equation}
From $u_{n}\rightharpoonup u$, we have
\begin{equation}
\underset{n\rightarrow\infty}{\text{lim}}\langle A_{p(x,y)}(u_{n}),u_{n}-u\rangle=\underset{n\rightarrow\infty}{\text{lim}}\langle A_{p(x,y)}(u_{n})-A_{p(x,y)}(u),u_{n}-u\rangle=0
\end{equation}
On computation, we get $\langle A_{p(x,y)}(u_{n}),u_{n}-u\rangle$ is equal to
\begin{equation}
\int_{\Omega}\int_{\Omega}\left(\frac{|u_{n}(x)-u_{n}(y)|^{p(x,y)}}{|x-y|^{N+s(x,y)p(x,y)}}-\frac{|u_{n}(x)-u_{n}(y)|^{p(x,y)-2}(u_{n}(x)-u_{n}(y))(u(x)-u(y))}{|x-y|^{N+s(x,y)p(x,y)}}\right) dx dy
\end{equation}
Now, applying Young's inequality in the second term
\begin{align*}
\begin{split}
&\int_{\Omega}\int_{\Omega}\frac{|u_{n}(x)-u_{n}(y)|^{p(x,y)-2}(u_{n}(x)-u_{n}(y))(u(x)-u(y))}{|x-y|^{N+s(x,y)p(x,y)}} dx dy\\
=&\int_{\Omega}\int_{\Omega}\left(|u_{n}(x)-u_{n}(y)|^{p(x,y)-1}{\left(\frac{1}{|x-y|^{N+s(x,y)p(x,y)}}\right)}^{\frac{1}{p^{\prime}(x,y)}}\right)\cdot\\
&\left(|u(x)-u(y)|{\left(\frac{1}{|x-y|^{N+s(x,y)p(x,y)}}\right)}^{\frac{1}{p(x,y)}}\right)dx dy\\
\leq& \int_{\Omega}\int_{\Omega}\left(\frac{p(x,y)-1}{p(x,y)}\right)\left[\frac{|u_{n}(x)-u_{n}(y)|^{p(x,y)}}{|x-y|^{N+s(x,y)p(x,y)}}\right]dx dy\\
+&\int_{\Omega}\int_{\Omega}\frac{1}{p(x,y)}\left[\frac{|u(x)-u(y)|^{p(x,y)}}{|x-y|^{N+s(x,y)p(x,y)}}\right]dx dy
\end{split}
\end{align*}
Hence, 
\begin{align}
\begin{split}
\langle A_{p(x,y)}(u_{n}),u_{n}-u\rangle &\geq \int_{\Omega}\int_{\Omega}\frac{1}{p(x,y)}\left[\frac{|u_{n}(x)-u_{n}(y)|^{p(x,y)}}{|x-y|^{N+s(x,y)p(x,y)}}-\frac{|u(x)-u(y)|^{p(x,y)}}{|x-y|^{N+s(x,y)p(x,y)}}\right]dx dy\\
&\geq c\int_{\Omega}\int_{\Omega}\left[\frac{|u_{n}(x)-u_{n}(y)|^{p(x,y)}}{|x-y|^{N+s(x,y)p(x,y)}}-\frac{|u(x)-u(y)|^{p(x,y)}}{|x-y|^{N+s(x,y)p(x,y)}}\right]dx dy
\end{split}
\end{align}
where $c > 0$.
Thus, from (6.1), (6.2) and (6.4), we get that
\begin{equation}
\underset{n\rightarrow\infty}{\text{lim}}\int_{\Omega}\int_{\Omega}\frac{|u_{n}(x)-u_{n}(y)|^{p(x,y)}}{|x-y|^{N+s(x,y)p(x,y)}}=\int_{\Omega}\int_{\Omega}\frac{|u(x)-u(y)|^{p(x,y)}}{|x-y|^{N+s(x,y)p(x,y)}}
\end{equation}
From the fact that $u_{n}(x)\rightarrow u(x)$ a.e. in $\Omega$, (6.5) and Brezis-Lieb lemma \cite{Lieb}, we conclude that $u_{n}\rightarrow u$ in $W_{0}^{s(x,y),p(x,y)}(\overline{\Omega})$. Hence, $A_{p(x,y)}$ is a mapping of type $(S_{+})$. We also know that $A_{p(x,y)}$ is surjective (refer \cite{Zed}). So, there exists an inverse map $A_{p(x,y)}^{-1}:W^{-s(x,y),p(x,y)}{\Omega}\rightarrow W_{0}^{s(x,y),p(x,y)}(\overline{\Omega})$. We will now prove that the map $A_{p(x,y)}^{-1}$ is continuous. Let $f_{n},f\in W^{-s(x,y),p(x,y)}{\Omega}$ such that $f_{n}\rightarrow f$ in $W^{-s(x,y),p(x,y)}{\Omega}$. Also let $A_{p(x,y)}^{-1}(f_{n})=u_{n}$ and $A_{p(x,y)}^{-1}(f)=u$ i.e. $A_{p(x,y)}(u_{n})=f_{n}$ and $A_{p(x,y)}(u)=f$. Since, $A_{p(x,y)}$ is coercive so $(u_{n})$ is bounded in $W_{0}^{s(x,y),p(x,y)}(\overline{\Omega})$. By reflexivity of space $W_{0}^{s(x,y),p(x,y)}(\overline{\Omega})$, let $u_{n}\rightharpoonup u_{0}$(say) in $W_{0}^{s(x,y),p(x,y)}(\overline{\Omega})$. It follows that $$\underset{n\rightarrow\infty}{\text{lim}}\langle A_{p(x,y)}(u_{n})-A_{p(x,y)}(u_{0}),u_{n}-u_{0}\rangle=\underset{n\rightarrow\infty}{\text{lim}}\langle f_{n},u_{n}-u\rangle=0.$$ As $A_{p(x,y)}$ is of type $(S_{+})$ so $u_{n}\rightarrow u_{0}$ in $W_{0}^{s(x,y),p(x,y)}(\overline{\Omega})$ and hence $A_{p(x,y)}$ is a homeomorphism.
\end{proof}
\begin{lemma}
Let $u$ be a weak solution to the problem (1.1) with $\|u\| < 1$ and $|u| >1$, then $u\in L^{\infty}(\Omega)$.
\end{lemma}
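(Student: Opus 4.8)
The plan is to run a De Giorgi--Stampacchia truncation argument on the super-level sets of $u$, using that the right-hand side of the equation grows strictly subcritically. Since $\|u\|<1$, the modular inequalities of Section~2 let me pass freely between the norm $\|\cdot\|$ and the Gagliardo modular $\rho_{X_{0}}$. On $\{|u|\le 1\}$ the bound $|u|\le 1$ is immediate, so it suffices to control $u$ on $\{|u|>1\}$; there, because $2<\alpha^{+}<p^{-}<r^{+}$, one has $|u|^{\alpha(x)-1}\le|u|^{\alpha^{+}-1}\le|u|^{r^{+}-1}$, while $(f_{3})$ gives $|f(x,u)|\le C|u|^{r^{+}-1}$ (integrating $|\partial f/\partial s|\le\delta|s|^{r^{+}-2}$). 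Hence the entire right-hand side is dominated by $C|u|^{r^{+}-1}$ with $r^{+}<p^{*}_{s}(x)$, and this subcriticality is what will make the iteration close.

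First I would fix a level $k\ge 1$, put $u_{k}=(u-k)^{+}$ and $A_{k}=\{x\in\Omega:u(x)>k\}$, and test the weak formulation (3.1) with $v=u_{k}$. This is admissible: $u_{k}=0$ outside $\Omega$, $|u_{k}|\le|u|$, and since $t\mapsto(t-k)^{+}$ is $1$-Lipschitz, $\rho_{X_{0}}(u_{k})\le\rho_{X_{0}}(u)<1$, so $u_{k}\in X_{0}$ with $\|u_{k}\|<1$. Using the pointwise inequality
\[
|a-b|^{p(x,y)-2}(a-b)\big((a-k)^{+}-(b-k)^{+}\big)\ \ge\ \big|(a-k)^{+}-(b-k)^{+}\big|^{p(x,y)},
\]
valid because $|a-b|\ge|(a-k)^{+}-(b-k)^{+}|$, the two differences share a sign, and $p(x,y)-1>1$, the left side of (3.1) is at least $\rho_{X_{0}}(u_{k})\ge\|u_{k}\|^{p^{+}}$, while the right side is at most $C\int_{A_{k}}|u|^{r^{+}-1}u_{k}\,dx$. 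The lower bound on $u$, i.e.\ control of $\{u<-k\}$, follows symmetrically by testing with $-(-u-k)^{+}$.

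Next I would convert this Caccioppoli-type estimate into a recursion. Fix a constant $m$ with $r^{+}<m<p^{*}_{s}(x)$ for every $x$ (possible since $\overline{\Omega}$ is compact and $r^{+}<p^{*}_{s}(x)$ with room to spare) and use the compact embedding of Theorem~2.3 to get $\|u_{k}\|_{L^{m}(\Omega)}\le K\|u_{k}\|$. Applying the generalized H\"older inequality on $A_{k}$ to the right-hand side and peeling off a positive power of $|A_{k}|$ (using $m>r^{+}$, $u_{k}\le u$ on $A_{k}$, and $\|u\|_{L^{m}}<\infty$), then combining with Chebyshev's inequality $(k-h)^{m}|A_{k}|\le\int_{A_{h}}|u_{h}|^{m}$ for $h<k$, yields a bound of Stampacchia type
\[
\psi(k)\ \le\ \frac{C}{(k-h)^{\gamma}}\,\psi(h)^{\,1+\varepsilon}\qquad(k_{0}\le h<k),
\]
where $\psi(k)=|A_{k}|$ (or $\psi(k)=\int_{A_{k}}u_{k}\,dx$) and $\gamma,\varepsilon>0$. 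Stampacchia's iteration lemma then forces $\psi(k)=0$ for all $k$ beyond some finite $k_{\infty}$, i.e.\ $u\le k_{\infty}$ a.e.; together with the symmetric lower bound this gives $|u|\le k_{\infty}$, so $u\in L^{\infty}(\Omega)$.

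The step I expect to be the main obstacle is the exponent bookkeeping. Because $p(x,y)$, $\alpha(x)$ and $p^{*}_{s}(x)$ all vary with the point, every passage between a modular and a norm, and between an $L^{m}$-estimate and the measure of a level set, has to be funneled through the constants $p^{\pm},\alpha^{\pm},r^{+},m$ via the modular and power inequalities of Section~2, and one must verify that, after all of these steps, the exponent on $\psi(h)$ is genuinely larger than $1$; this is exactly where $r^{+}<p^{*}_{s}(x)$ (equivalently, the existence of an intermediate exponent $m$) is indispensable, since without it the recursion would be only linear and useless. A secondary technicality is that the truncation inequality above is the nonlocal analogue of the chain rule and must be applied inside the double integral over $\mathbb{R}^{N}\times\mathbb{R}^{N}$, and that one must keep $\|u_{k}\|<1$ throughout, so that the favourable branch $\rho_{X_{0}}(u_{k})\ge\|u_{k}\|^{p^{+}}$ is the one that applies.
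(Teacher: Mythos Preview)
Your level-set/Stampacchia approach is sound and would yield the result, but it is a genuinely different route from the paper's. The paper runs a Moser-type iteration: it tests with the truncated power $u_{m}^{k}$ (where $u_{m}=\min\{u^{+},m\}$), invokes the pointwise inequality of Brasco--Lindgren--Parini (Lemma~6.4) to bound the nonlocal form from below by a Gagliardo energy of $u_{m}^{(k+p(x,y)-1)/p(x,y)}$, and then uses an auxiliary monotonicity lemma for variable powers (Lemma~6.3, which is precisely where the hypothesis $|u|>1$ enters) to replace the variable power by the constant $\Lambda=(k+r^{+}-1)/r^{+}$. A Sobolev embedding into $L^{\Lambda p^{\pm}}$ and sending $\Lambda\to\infty$ then finish the argument. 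By contrast, your De~Giorgi--Stampacchia scheme tests with the $1$-Lipschitz truncation $(u-k)^{+}$, needs neither Lemma~6.3 nor Lemma~6.4, and reduces matters to a nonlinear recursion on the measures $|A_{k}|$. The price you pay is the exponent bookkeeping you already flag: the gain $\varepsilon>0$ in $\psi(k)\le C(k-h)^{-\gamma}\psi(h)^{1+\varepsilon}$ depends on squeezing an intermediate constant exponent $m$ strictly between $r^{+}$ and $\inf_{x}p_{s}^{*}(x)$ and on the modular--norm relation $\rho_{X_{0}}(u_{k})\ge\|u_{k}\|^{p^{+}}$ (valid because $\|u_{k}\|\le\|u\|<1$), and these must be tracked carefully through the variable-exponent H\"older and embedding inequalities. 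The paper's route concentrates the variable-exponent difficulty into one place (Lemma~6.3), while yours distributes it across the recursion; both are valid, and yours is arguably more elementary since it avoids the two cited auxiliary lemmas.
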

\noindent We will first state two important results which is useful for proving this lemma. 
\begin{lemma} (refer \cite{Biswas})
Let $u:\Omega\rightarrow\mathbb{R}$ be a function such that $u(x) > 1$ a.e. $x\in \Omega$ and $\eta(.,.)$ be a symmetric real valued function such that $0\leq \eta(x,y) < \infty$ for all $(x,y)\in \Omega\times\Omega$. Suppose, $0\leq\eta_{0}\leq\eta^{-}:=\underset{(x,y)\in\Omega\times\Omega}{\text{inf}}\eta(x,y)$. Then we have the following inequality $|u^{\eta(x,y)}(x)-u^{\eta(x,y)}(y)|\geq|u^{\eta_{0}}(x)-u^{\eta_{0}}(y)|$.
\end{lemma}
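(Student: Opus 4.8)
The plan is to reduce the asserted inequality to an elementary one-variable monotonicity fact. Fix a point $(x,y)\in\Omega\times\Omega$. Because $\eta$ is symmetric and the inequality to be proved is invariant under interchanging $x$ and $y$, I may assume without loss of generality that $u(x)\geq u(y)$; by hypothesis both numbers are strictly larger than $1$. Moreover the choice of $\eta_{0}$ gives $0\leq\eta_{0}\leq\eta^{-}\leq\eta(x,y)$, so the task is to compare the expression at the exponent $\eta(x,y)$ with the same expression at the smaller exponent $\eta_{0}$.

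Next I would introduce, with $x$ and $y$ fixed, the auxiliary function $\phi(t)=u(x)^{t}-u(y)^{t}$ for $t\in[0,\infty)$. Since $u(x)\geq u(y)>1$ we have $\phi(t)\geq 0$ for every $t\geq 0$, so that $|u^{t}(x)-u^{t}(y)|=\phi(t)$ and the absolute values in the statement may be dropped. Differentiating, $\phi'(t)=u(x)^{t}\ln u(x)-u(y)^{t}\ln u(y)$; here $u(x)^{t}\geq u(y)^{t}>0$ and, crucially because $u(x),u(y)>1$, also $\ln u(x)\geq\ln u(y)>0$. Hence each factor in the first product dominates the corresponding factor in the second, so $\phi'(t)\geq 0$ on $[0,\infty)$ and $\phi$ is nondecreasing there.

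Finally, monotonicity of $\phi$ together with $\eta(x,y)\geq\eta_{0}$ yields $\phi(\eta(x,y))\geq\phi(\eta_{0})$, i.e.
\[
|u^{\eta(x,y)}(x)-u^{\eta(x,y)}(y)|=\phi(\eta(x,y))\geq\phi(\eta_{0})=|u^{\eta_{0}}(x)-u^{\eta_{0}}(y)|,
\]
and since $(x,y)\in\Omega\times\Omega$ was arbitrary this is the claim. I do not expect any genuine obstacle: the only steps requiring a little care are the symmetry reduction that removes the absolute values and the observation that the hypothesis $u>1$ is exactly what makes the logarithms positive and thereby fixes the sign of $\phi'$; the degenerate cases $\eta(x,y)=0$ and $u(x)=u(y)$ need no separate treatment since then both sides of the inequality vanish. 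This merely reproduces the argument of \cite{Biswas}.
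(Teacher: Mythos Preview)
Your argument is correct. The paper itself does not supply a proof of this lemma; it merely cites \cite{Biswas}, so there is no in-paper argument to compare against. Your reduction to the monotonicity of $\phi(t)=u(x)^{t}-u(y)^{t}$ via the sign of $\phi'(t)=u(x)^{t}\ln u(x)-u(y)^{t}\ln u(y)$ is the natural elementary route, and your identification of the hypothesis $u>1$ as precisely what forces $\ln u(x)\geq\ln u(y)\geq 0$ (and hence $\phi'\geq 0$) is exactly the point.
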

\begin{lemma} (refer \cite{Parini})
Let $1 < p < \infty$ and $k\geq 1$. For every $a,b,m\geq 0$, it holds that $|a-b|^{p-2}(a-b)(a_{m}^{k}-b_{m}^{k})\geq \frac{kp^{p}}{(k+p-1)^{p}}|a_{m}^{\frac{k+p-1}{p}}-b_{m}^{\frac{k+p-1}{p}}|^{p}$, where $a_{m}=\text{min}\left\lbrace a,m\right\rbrace$ and $b_{m}=\text{min}\left\lbrace b,m\right\rbrace$.
\end{lemma}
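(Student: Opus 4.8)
The idea is to reduce the two-variable statement to an elementary one-variable inequality and then extract the sharp constant $\tfrac{kp^{p}}{(k+p-1)^{p}}$ from a single application of H\"{o}lder's inequality.

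First I would normalize. Exchanging $a$ and $b$ leaves both sides of the claimed inequality unchanged (the left-hand side picks up two sign changes which cancel, the right-hand side is an absolute value), so I may assume $a\ge b\ge 0$. Then $a_{m}=\min\{a,m\}\ge b_{m}=\min\{b,m\}\ge 0$, hence $|a-b|^{p-2}(a-b)=(a-b)^{p-1}$, $a_{m}^{k}-b_{m}^{k}\ge 0$, and the left-hand side equals $(a-b)^{p-1}\bigl(a_{m}^{k}-b_{m}^{k}\bigr)$. Since $s\mapsto\min\{s,m\}$ is a contraction, a short case analysis (according as $a,b\le m$, or $b\le m\le a$, or $m\le b$) gives $0\le a_{m}-b_{m}\le a-b$, and therefore $(a-b)^{p-1}\ge (a_{m}-b_{m})^{p-1}$. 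It thus suffices to prove, for all reals with $0\le b_{m}\le a_{m}$,
\[
(a_{m}-b_{m})^{p-1}\bigl(a_{m}^{k}-b_{m}^{k}\bigr)\ \ge\ \frac{kp^{p}}{(k+p-1)^{p}}\bigl(a_{m}^{\,q}-b_{m}^{\,q}\bigr)^{p},\qquad q:=\frac{k+p-1}{p}\ \ge\ 1 .
\]

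The key point is the elementary identity $p(q-1)=k-1$, i.e. $\bigl(s^{\,q-1}\bigr)^{p}=s^{\,k-1}$. Writing $a_{m}^{\,q}-b_{m}^{\,q}=q\int_{b_{m}}^{a_{m}}s^{\,q-1}\,ds=q\int_{b_{m}}^{a_{m}}s^{(k-1)/p}\,ds$ and applying H\"{o}lder's inequality with exponents $p$ and $p'=\tfrac{p}{p-1}$ to the product $s^{(k-1)/p}\cdot 1$,
\[
\int_{b_{m}}^{a_{m}}s^{(k-1)/p}\,ds\ \le\ \Bigl(\int_{b_{m}}^{a_{m}}s^{\,k-1}\,ds\Bigr)^{1/p}\Bigl(\int_{b_{m}}^{a_{m}}1\,ds\Bigr)^{(p-1)/p}=\Bigl(\tfrac{a_{m}^{k}-b_{m}^{k}}{k}\Bigr)^{1/p}(a_{m}-b_{m})^{(p-1)/p}.
\]
Raising to the $p$-th power gives $\bigl(a_{m}^{\,q}-b_{m}^{\,q}\bigr)^{p}\le q^{p}\,\tfrac{a_{m}^{k}-b_{m}^{k}}{k}\,(a_{m}-b_{m})^{p-1}$, which rearranges to $(a_{m}-b_{m})^{p-1}\bigl(a_{m}^{k}-b_{m}^{k}\bigr)\ge \tfrac{k}{q^{p}}\bigl(a_{m}^{\,q}-b_{m}^{\,q}\bigr)^{p}=\tfrac{kp^{p}}{(k+p-1)^{p}}\bigl(a_{m}^{\,q}-b_{m}^{\,q}\bigr)^{p}$. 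Since $a_{m}^{\,q}-b_{m}^{\,q}\ge 0$, one may reinstate absolute values, and combining with the two reductions above gives the lemma.

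The degenerate cases are immediate: if $m=0$, or $a=b$, or $a,b\ge m$, then both sides vanish; and for $k=1$ one has $q=1$, the constant equals $1$, and the inequality reduces to the identity $(a_{m}-b_{m})^{p}=(a_{m}-b_{m})^{p}$. I do not expect a genuine obstacle. The only things requiring any care are the sign bookkeeping in the normalization and the contraction property of the truncation $s\mapsto\min\{s,m\}$ used to replace $(a,b)$ by $(a_{m},b_{m})$ in the first factor; the substance of the argument is the relation $p(q-1)=k-1$, which is exactly what lets one H\"{o}lder step reproduce the sharp constant $\tfrac{kp^{p}}{(k+p-1)^{p}}$.
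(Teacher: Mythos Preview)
Your argument is correct. The symmetry reduction, the contraction property $0\le a_{m}-b_{m}\le a-b$ of the truncation, and the H\"older step with the identity $p(q-1)=k-1$ all go through exactly as you describe, and the constant $k/q^{p}=kp^{p}/(k+p-1)^{p}$ drops out cleanly.

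The paper, however, does not prove this lemma at all: it is stated with a bare citation to \cite{Parini} (Brasco--Lindgren--Parini) and then immediately used inside the proof of the $L^{\infty}$ estimate (Lemma~6.2). So there is no ``paper's own proof'' to compare against. What you have written is a short, self-contained derivation that would spare the reader the external reference; it is essentially the standard argument one finds in \cite{Parini}, namely the integral representation of $a_{m}^{q}-b_{m}^{q}$ together with one H\"older inequality, preceded by the observation that the first factor $|a-b|^{p-1}$ can be replaced by $(a_{m}-b_{m})^{p-1}$ because truncation is $1$-Lipschitz.
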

\begin{proof}
We have $|u| > 1$ then $|u|=u^{+}$ or $|u|=u^{-}$. We will first assume that $|u|=u^{+} > 1$. Define a truncation function $u_{m}(x)=\text{min}\left\lbrace m,u^{+}(x)\right\rbrace$. Clearly, $u_{m}(x)\in W_{0}^{s(x,y),p(x,y)}(\overline{\Omega})$. Taking $u_{m}^{k},\; k > 1$ as a test function in (3.1) and using assumption $(f_{4})$ along with Lemma 6.4 , we have
\begin{align}
\begin{split}
&\int_{\mathbb{R}^{N}}\int_{\mathbb{R}^{N}}\frac{kp^{p(x,y)}(x,y)}{(k+p(x,y)-1)^{p(x,y)}}\frac{| u_{m}^{\frac{k+p(x,y)-1}{p(x,y)}}(x)-u_{m}^{\frac{k+p(x,y)-1}{p(x,y)}}(y)|^{p(x,y)}}{|x-y|^{N+s(x,y)p(x,y)}}dx dy\\
&\leq\int_{\mathbb{R}^{N}}\int_{\mathbb{R}^{N}}\frac{|u^{+}(x)-u^{+}(y)|^{p(x,y)-2}(u^{+}(x)-u^{+}(y))(u_{m}^{k}(x)-u_{m}^{k}(y))}{|x-y|^{N+s(x,y)p(x,y)}}dx dy\\
&\leq\beta\int_{\Omega}|u^{+}|^{\alpha(x)}u^{+}u_{m}^{k}(x)+\lambda\int_{\Omega}f(x,u^{+})u_{m}^{k}(x)dx\\
&\leq\beta\int_{\Omega}|u^{+}|^{\alpha(x)+k-1}dx+\lambda\epsilon\int_{\Omega}|u^{+}|^{r^{+}+k-1}dx\\
&< \left(\beta+\lambda\epsilon\right) \int_{\Omega}|u^{+}|^{r^{+}+k-1}dx
\end{split}
\end{align}
Also from Lemma 6.3, we have
$$| u_{m}^{\frac{k+p(x,y)-1}{p(x,y)}}(x)-u_{m}^{\frac{k+p(x,y)-1}{p(x,y)}}(y)|\geq | u_{m}^{\frac{k+r^{+}-1}{r^{+}}}(x)-u_{m}^{\frac{k+r^{+}-1}{r^{+}}}(y)|.$$ From a simple computation it can be seen that
$$\left(\frac{p^{-}}{k+p^{+}-1}\right)^{p^{+}}\int_{\mathbb{R}^{N}}\int_{\mathbb{R}^{N}}\frac{| u_{m}^{\frac{k+r^{+}-1}{r^{+}}}(x)-u_{m}^{\frac{k+r^{+}-1}{r^{+}}}(y)|^{p(x,y)}}{|x-y|^{N+s(x,y)p(x,y)}}dx dy <\left(\beta+\lambda\epsilon\right) \int_{\Omega}|u^{+}|^{r^{+}+k-1}dx.$$
Hence, 
$$\int_{\mathbb{R}^{N}}\int_{\mathbb{R}^{N}}\frac{| u_{m}^{\frac{k+r^{+}-1}{r^{+}}}(x)-u_{m}^{\frac{k+r^{+}-1}{r^{+}}}(y)|^{p(x,y)}}{|x-y|^{N+s(x,y)p(x,y)}}dx dy < \left(\beta+\lambda\epsilon\right)\left(\frac{k+p^{+}-1}{p^{-}}\right)^{p^{+}}\int_{\Omega}|u^{+}|^{r^{+}+k-1}dx.$$
Let $\Lambda=\frac{k+r^{+}-1}{r^{+}}\geq 1$. Using the fact that $u_{m}(x)\rightarrow u^{+}(x)$ a.e. $x\in\Omega$, we obtain
\begin{align*}
\begin{split}
\int_{\mathbb{R}^{N}}\int_{\mathbb{R}^{N}}\frac{| u^{+\Lambda}(x)-u^{+\Lambda}(y)|^{p(x,y)}}{|x-y|^{N+s(x,y)p(x,y)}}dx dy
&\leq \underset{n\rightarrow\infty}{{\underline{\text{lim}}}}\int_{\mathbb{R}^{N}}\int_{\mathbb{R}^{N}}\frac{| u_{m}^{\Lambda}(x)-u_{m}^{\Lambda}(y)|^{p(x,y)}}{|x-y|^{N+s(x,y)p(x,y)}}dx dy\\
&< \left(\beta+\lambda\epsilon\right)\left(\frac{k+p^{+}-1}{p^{-}}\right)^{p^{+}}\int_{\Omega}|u^{+}|^{r^{+}+k-1}dx.
\end{split}
\end{align*}
Now there arises two cases.\\
\textbf{Case I:} $\|u^{+\Lambda}\| < 1$ then 
\begin{align*}
\begin{split}
\|u^{+\Lambda}\|^{p^{+}} &< \left(\beta+\lambda\epsilon\right)\left(\frac{k+p^{+}-1}{p^{-}}\right)^{p^{+}}\int_{\Omega}|u^{+}|^{r^{+}+k-1}dx\\
&\leq\left(\beta+\lambda\epsilon\right)\left(\frac{k+p^{+}-1}{p^{-}}\right)^{p^{+}}\|u^{+}\|^{r^{+}+k-1}_{L^{r^{+}+k-1}(\Omega)}\\
&\leq c_{1}\left(\beta+\lambda\epsilon\right)\left(\frac{k+p^{+}-1}{p^{-}}\right)^{p^{+}}\|u^{+}\|^{r^{+}+k-1}
\end{split}
\end{align*}
where $c_{1}$ is a Sobolev constant. Since, $\|u\| < 1$ so $\|u^{+}\| < 1$ and hence $$\|u^{+\Lambda}\|^{p^{+}}\leq c_{1}\left(\beta+\lambda\epsilon\right)\left(\frac{k+p^{+}-1}{p^{-}}\right)^{p^{+}}\|u^{+}\|^{r^{+}-1}.$$  Also by the embedding of $W_{0}^{s(x,y)p(x,y)}(\overline{\Omega})$ into $L^{\Lambda p^{+}}(\Omega)$, we have
\begin{align*}
\begin{split}
\|u^{+}\|_{\Lambda^{2}p^{+}}^{\Lambda p^{+}}&\leq C^{\prime}\left(\beta+\lambda\epsilon\right)\left(\frac{k+p^{+}-1}{p^{-}}\right)^{p^{+}}\|u^{+}\|^{r^{+}-1}\\
&\leq C^{\prime}\left(\beta+\lambda\epsilon\right)\left(\frac{k+p^{+}-1}{p^{-}}\right)^{p^{+}}
\end{split}
\end{align*}
which further implies that
$\|u^{+}\|_{\Lambda^{2}p^{+}}\leq \left(C^{\prime}\left(\beta+\lambda\epsilon\right)\right)^{\frac{1}{\Lambda p^{+}}}\left(\frac{k+p^{+}-1}{p^{-}}\right)^{\frac{1}{\Lambda}}$. Now letting $\Lambda\rightarrow\infty,$ we get $u^{+}\in L^{\infty}(\Omega)$.\\
\textbf{Case II:} If $\|u^{+\Lambda}\| > 1$, then computing in a similar way we get
\begin{align*}
\begin{split}
\|u^{+}\|_{\Lambda^{2}p^{-}}^{\Lambda p^{-}}&\leq C^{\prime}\left(\beta+\lambda\epsilon\right)\left(\frac{k+p^{+}-1}{p^{-}}\right)^{p^{+}}\|u^{+}\|^{r^{+}-1}\\
&\leq C^{\prime}\left(\beta+\lambda\epsilon\right)\left(\frac{k+p^{+}-1}{p^{-}}\right)^{p^{+}}
\end{split}
\end{align*}
which implies that $\|u^{+}\|_{\Lambda^{2}p^{-}}\leq \left(C^{\prime}\left(\beta+\lambda\epsilon\right)\right)^{\frac{1}{\Lambda p^{-}}}\left(\frac{k+p^{+}-1}{p^{-}}\right)^{\frac{p^{+}}{\Lambda p^{-}}}$. Again letting $\Lambda\rightarrow\infty,$ we get $u^{+}\in L^{\infty}(\Omega)$. Repeating the same process for $u^{-}$, we get $u^{-}\in L^{\infty}(\Omega)$. Since, $u=u^{+}-u^{-}$ so $u\in L^{\infty}(\Omega)$. Thus, we get the desired result.
\end{proof}
\section{Conclusion}
We have shown the existence of two nontrivial weak solutions for the problem (1.1) using the variational method and the Banach fixed point theorem in the Nehari manifold for a particular range of $\beta$ and $\lambda$. We have also proved that both the solutions are in $L^{\infty}(\Omega)$.
\section*{Acknowledgement}
The author Amita Soni thanks the Department of
Science and Technology (D. S. T), Govt. of India for financial
support. Both the authors also acknowledge the facilities received
from the Department of mathematics, National Institute of Technology Rourkela. On behalf of all authors, the corresponding author states that there is no conflict of interest.

 {\sc Amita Soni} and {\sc D. Choudhuri}\\
Department of Mathematics,\\
National Institute of Technology Rourkela, Rourkela - 769008,
India\\
e-mails: soniamita72@gmail.com and dc.iit12@gmail.com.

\end{document}